\newcommand{\cok}{{\operatorname{cok}}}
\newcommand{\Cl}{{\operatorname{Cl}}}
\newcommand{\Aut}{\operatorname{Aut}}
\newcommand{\GL}{\operatorname{GL}}
\newcommand{\Sp}{\operatorname{Sp}}
\newcommand{\GSp}{\operatorname{GSp}}
\newcommand{\rank}{\operatorname{rank}}
\newcommand{\Prob}{\operatorname{Prob}}
\newcommand{\M}{\operatorname{M}}
\newcommand{\Z}{\mathbb{Z}}
\newcommand{\Q}{\mathbb{Q}}
\newcommand{\F}{\mathbb{F}}
\theoremstyle{definition}
\newtheorem{theorem}{Theorem}[section]
\newtheorem{proposition}[theorem]{Proposition}
\newtheorem{lemma}[theorem]{Lemma}
\newtheorem{conjecture}[theorem]{Conjecture}
\newtheorem*{conjecture*}{Conjecture}
\newtheorem{remark}[theorem]{Remark}
\newtheorem{example}[theorem]{Example}
\numberwithin{equation}{section}
\newcommand\semilarge{\@setfontsize\semilarge{11}{13.2}}
\title{\semilarge{\textbf{JOINT DISTRIBUTION OF THE COKERNELS OF RANDOM $p$-ADIC MATRICES}}}
\author{\normalsize{JUNGIN LEE} }
\date{}
\newcommand\shorttitle{JOINT DISTRIBUTION OF THE COKERNELS OF RANDOM $p$-ADIC MATRICES}
\newcommand\authors{JUNGIN LEE}
\ifodd\value{page}
\authors
\shorttitle
\renewenvironment{abstract}
 {\quotation\small\noindent\rule{\linewidth}{.5pt}\par\smallskip
  {\centering\bfseries\abstractname\par}\medskip}
 {\par\noindent\rule{\linewidth}{.5pt}\endquotation}
\begin{document}
\maketitle
\vspace{-15mm}

\begin{abstract}
In this paper, we study the joint distribution of the cokernels of random $p$-adic matrices. 
Let $p$ be a prime and $P_1(t), \cdots, P_l(t) \in \Z_p[t]$ be monic polynomials whose reductions modulo $p$ in $\F_p[t]$ are distinct and irreducible. 
We determine the limit of the joint distribution of the cokernels $\cok (P_1(A)), \cdots, \cok(P_l(A))$ for a random $n \times n$ matrix $A$ over $\Z_p$ with respect to Haar measure as $n \rightarrow \infty$. 
By applying the linearization of a random matrix model, we also provide a conjecture which generalizes this result.
Finally, we provide a sufficient condition that the cokernels $\cok(A)$ and $\cok(A+B_n)$ become independent as $n \rightarrow \infty$, where $B_n$ is a fixed $n \times n$ matrix over $\Z_p$ for each $n$ and $A$ is a random $n \times n$ matrix over $\Z_p$.
\end{abstract}

\section{Introduction} \label{Sec1}

The study of the distribution of the cokernel of a random $p$-adic matrix for a prime $p$ was initiated by the work of Washington \cite{Was86} on the Cohen-Lenstra heuristics. Based on this work, Friedman and Washington \cite{FW89} proved that for a finite abelian $p$-group $H$, 
\begin{equation} \label{eq1a}
\lim_{n \rightarrow \infty} \underset{A \in \M_{n}(\Z_p)}{\Prob} (\cok (A) \cong H) = \frac{1}{\left | \Aut(H) \right |} \prod_{i=1}^{\infty}(1-p^{-i})
\end{equation}
where $\M_n(\Z_p)$ denotes the set of $n \times n$ matrices over $\Z_p$ and $A$ is random with respect to the Haar probability measure on $\M_n(\Z_p)$. For an odd $p$, the right hand side of the formula (\ref{eq1a}) coincides with the conjectured distribution of the $p$-parts of the class groups of imaginary quadratic fields predicted by Cohen and Lenstra \cite{CL84}. When $p=2$, this coincides with the distribution of $(2 \Cl(K))[2^{\infty}]$ ($\Cl(K)$ denotes the class group of $K$) for imaginary quadratic fields $K$, as conjectured by Gerth \cite{Ger87} and proved by Smith \cite[Theorem 1.9]{Smi22}.

Recently, there have been a lot of works which concern the distribution of the cokernels of various types of random matrices over $\Z_p$. 
The distribution of the cokernel of a random symmetric (resp. alternating) matrix over $\Z_p$ was determined by Clancy, Kaplan, Leake, Payne and Wood \cite{CKLPW15} (resp. Bhargava, Kane, Lenstra, Poonen and Rains \cite{BKLPR15}). 
A model of the $p$-parts of the Tate-Shafarevich groups of elliptic curves over $\Q$ in terms of the cokernels of random alternating matrices was also suggested in \cite{BKLPR15}. 
The distribution of this random matrix model is compatible with the prediction of Delaunay (\cite{Del01}, \cite{Del07}, \cite{DJ14}) on the distribution of the $p$-parts of the Tate-Shafarevich groups.
Wood \cite{Woo19} extended the formula (\ref{eq1a}) to a large family of random matrices in $\M_n(\Z_p)$ whose entries are independent and their reductions modulo $p$ are not too concentrated. 
The key step of the proof of such universality result is to determine the moments of the cokernels of random matrices. See also \cite{NW22}, \cite{Woo17} for more works in this direction.

Friedman and Washington also proved that
\begin{equation} \label{eq1b}
\lim_{n \rightarrow \infty} \underset{A \in \GL_{n}(\Z_p)}{\Prob} (\cok (A-I_n) \cong H) = \lim_{n \rightarrow \infty} \underset{A \in \M_{n}(\Z_p)}{\Prob} (\cok (A) \cong H)
\end{equation}
($I=I_n$ denotes the $n \times n$ identity matrix) for every finite abelian $p$-group $H$. This means that as $A$ ranges over $\M_n(\Z_p)$, the events $\cok(A) = 0$ and $\cok(A-I_n) \cong H$ become independent as $n \rightarrow \infty$. 
As a natural generalization, Cheong and Huang \cite{CH21} suggested a conjecture on the joint distribution of $\cok(P_1(A)), \cdots, \cok(P_l(A))$ where $P_1(t), \cdots, P_l(t) \in \Z_p[t]$ are monic polynomials whose reductions modulo $p$ in $\F_p[t]$ are distinct and irreducible, and $A \in \M_n(\Z_p)$ is a random matrix. 
Cheong and Kaplan \cite{CK22} modified the conjecture and proved the following proposition by counting the number of matrices with given cokernel conditions and a given reduction modulo $p$.

\begin{proposition} \label{prop11}
(\cite[Theorem 1.1]{CK22}) Let $P_1(t), \cdots, P_l(t) \in \Z_p[t]$ be monic polynomials whose reductions modulo $p$ in $\F_p[t]$ are distinct and irreducible, and let $H_j$ be a finite module over $R_j := \Z_p[t]/(P_j(t))$ for each $1 \leq j \leq l$. Assume that $\deg (P_j) \leq 2$ for each $j$. Then we have
\begin{equation} \label{eq1c}
\lim_{n \rightarrow \infty} \underset{A \in \M_{n}(\Z_p)}{\Prob} \begin{pmatrix}
\cok(P_j(A)) \cong H_j \\ 
\text{ for } 1 \leq j \leq l
\end{pmatrix} 
= \prod_{j=1}^{l} \left ( \frac{1}{\left | \Aut_{R_j}(H_j) \right |} \prod_{i=1}^{\infty}(1-p^{-i \deg (P_j)}) \right ).
\end{equation}
\end{proposition}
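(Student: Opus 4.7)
I would follow the counting strategy indicated in the statement: condition on the reduction $\bar A := A \bmod p \in \M_n(\F_p)$, compute the conditional Haar probability of $\{\cok(P_j(A)) \cong H_j\ \forall j\}$ given $\bar A$, and average over $\bar A$ (which is uniform on $\M_n(\F_p)$).

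\emph{Primary decomposition of $\bar A$.} Because $\bar P_1,\ldots,\bar P_l$ are pairwise coprime irreducibles in $\F_p[t]$, the characteristic polynomial of $\bar A$ factors as $\prod_j \bar P_j^{a_j}\cdot \bar Q$ with $\bar Q$ coprime to each $\bar P_j$. Hensel's lemma lifts this to a coprime factorisation of the characteristic polynomial of $A$ over $\Z_p$, so after a measure-preserving $\GL_n(\Z_p)$-conjugation I may assume $A$ is block-diagonal with blocks $A_1,\ldots,A_l,A_W$ corresponding to the primary parts $V_j = \ker \bar P_j(\bar A)^n$ and to the complement $W$. On the $W$-block every $P_j(A_W)$ is invertible over $\Z_p$, so $A_W$ contributes nothing to any cokernel. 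Likewise, for $i \neq j$ the operator $P_i(A_j)$ is invertible over $\Z_p$, so the block $A_j$ contributes only to $\cok(P_j(A))$. Conditional on $\bar A$, the cokernels $\cok(P_1(A)),\ldots,\cok(P_l(A))$ are therefore mutually independent, and each depends only on its own block.

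\emph{Per-polynomial count.} For each $j$, $\cok(P_j(A_j))$ is naturally an $R_j$-module, where $R_j = \Z_p[t]/(P_j(t))$ is the DVR of integers in the unramified extension of $\Q_p$ of degree $\deg P_j$, with residue field $\F_{p^{\deg P_j}}$. Using the $\F_p[t]$-structure of $V_j$ together with Hensel's lemma, one identifies $\cok(P_j(A_j))$ with the cokernel of an explicit matrix over $R_j$ whose size grows linearly in $n$. A Friedman--Washington type count \cite{FW89} over this DVR -- in the manner of (\ref{eq1a}), but with $\Z_p$ replaced by $R_j$ and $p$ replaced by $|R_j/pR_j| = p^{\deg P_j}$ -- then yields the per-factor limit
\begin{equation*}
    \lim_{n \to \infty} \underset{A \in \M_n(\Z_p)}{\Prob}\bigl(\cok(P_j(A)) \cong H_j\bigr) = \frac{1}{|\Aut_{R_j}(H_j)|} \prod_{i=1}^{\infty}(1 - p^{-i\deg P_j}).
\end{equation*}
Combining the independent factors across $j$ and taking $n \to \infty$ delivers (\ref{eq1c}).

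The main obstacle, in my view, is the independence claim: one must check that the off-diagonal entries of $A$ lying in $p\M_n(\Z_p)$ truly decouple upon integrating against Haar measure, and one must preserve the full $R_j$-module structure of $\cok(P_j(A))$ rather than merely its abelian-$p$-group structure. The hypothesis $\deg P_j \leq 2$ is precisely what keeps this linear-algebraic bookkeeping explicit, because then $P_j(A)$ has degree at most two in $A$ and its block decomposition can be written out by hand. Lifting the argument to arbitrary $\deg P_j$ is, presumably, exactly the reason the rest of the paper develops a linearisation of the random matrix model.
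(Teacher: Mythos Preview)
The paper does not itself prove Proposition~\ref{prop11}; it quotes it from \cite{CK22} and then proves the stronger Theorem~\ref{mainthm1} by a \emph{different} method. Your sketch is closer in spirit to the \cite{CK22} counting argument than to anything in the present paper, so let me compare both.

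\medskip
\textbf{The genuine gap in your sketch.} The step ``after a measure-preserving $\GL_n(\Z_p)$-conjugation I may assume $A$ is block-diagonal'' is where the argument breaks. Conjugation by a \emph{fixed} $P\in\GL_n(\Z_p)$ preserves Haar measure, but the $P$ that block-diagonalises $A$ depends on $A$, so $A\mapsto P(A)^{-1}AP(A)$ is not a measure-preserving transformation. What one can do is fix $\bar A$ and conjugate by a single $P$ depending only on $\bar A$ so that $\bar A$ becomes block-diagonal; then $A$ has block form only modulo $p$, with off-diagonal entries in $p\Z_p$. At this point the cokernels $\cok(P_j(A))$ are \emph{not} obviously functions of the diagonal blocks alone, and the diagonal blocks are \emph{not} obviously independent Haar-random matrices over $R_j$. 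Showing that the $p$-adic off-diagonal noise washes out is precisely the hard counting in \cite{CK22}, and it is where the hypothesis $\deg P_j\le 2$ is actually used: with $P_j$ of degree at most $2$, the expansion of $P_j(A)$ in the block decomposition is short enough to analyse by hand. You correctly flag this as ``the main obstacle'', but the sketch does not supply the mechanism that resolves it.

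\medskip
\textbf{What the paper does instead.} The paper's own proof (of the unrestricted Theorem~\ref{mainthm1}) avoids the mod-$p$ block decomposition entirely. Via Lemma~\ref{lem21a} it replaces each $\cok(P_j(A))$ by the $R_j$-cokernel of the \emph{linear} matrix $A-tI\in\M_n(R_j)$, and then iteratively reduces the size of this matrix by elementary row/column operations (Lemma~\ref{lem21b}), eventually landing on a Haar-random matrix over $R_0=\Z_p[t]/(\prod_jP_j)$ whose image in $\prod_j\M_r(R_j)$ is Haar by the CRT (Lemma~\ref{lem21c}). This circumvents the independence-of-blocks issue completely and needs no degree bound. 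Note that this is \emph{not} the ``linearisation of a random matrix model'' of Section~\ref{Sec3} (which is the Lipnowski--Sawin--Tsimerman logarithm trick and is used only heuristically there); your closing sentence conflates the two.
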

The above proposition implies that the distributions of the cokernels $\cok(P_j(A))$ ($1 \leq j \leq l$) become independent as $n \rightarrow \infty$. To clarify the above proposition, we note the following. 

\begin{itemize}
    \item Each $R_j$ is a complete discrete valuation ring with a finite residue field $\F_p[t]/(\overline{P_j}(t)) \cong \F_{p^{\deg (P_j)} }$. This follows from the fact that the reduction modulo $p$ of each $P_j(t)$ is irreducible.
    
    \item The cokernel $\cok(P_j(A))$ has a natural $R_j$-module structure defined by $t \cdot x := Ax$. 
\end{itemize}

The first main theorem of the paper shows that the above proposition holds without any restriction on the degree of the polynomials. Our proof is based on probabilistic arguments using Lemma \ref{lem21b}, which is different from the proof of Proposition \ref{prop11} relies on the reduction modulo $p$. Its proof is given in Section \ref{Sub22} and the lemmas that will be used in the proof are summarized in Section \ref{Sub21}. 

\begin{theorem} \label{thm12}
(Theorem \ref{mainthm1}) Proposition \ref{prop11} is true without the assumption $\deg (P_j) \leq 2$.
\end{theorem}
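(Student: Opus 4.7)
\medskip

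\noindent\textbf{Proof proposal.} My plan is to attack Theorem~\ref{thm12} by the moment method: compute the limit of the joint surjection moments
\[
M_n(G_1,\ldots,G_l) \;:=\; \mathbb{E}_{A \in \M_n(\Z_p)} \prod_{j=1}^{l} \#\Sur_{R_j}\!\bigl(\cok(P_j(A)),\,G_j\bigr)
\]
as $G_j$ ranges over all finite $R_j$-modules, show that the limit equals $1$ for every tuple, and then invert the moments to obtain the joint distribution. Since each $R_j$ is a complete discrete valuation ring with residue field $\mathbb{F}_{p^{\deg P_j}}$, a Friedman--Washington-type inversion (identical to the derivation of \eqref{eq1a}, applied ring-by-ring) will produce the stated product formula; the asymptotic independence of the $\cok(P_j(A))$ will be automatic from the factorization of the joint moments into $1 = \prod_j 1$.

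The first step is to reformulate $M_n$ as a double sum. Using the identification of a surjection $\cok(P_j(A)) \twoheadrightarrow G_j$ with a surjection $\varphi_j\colon \Z_p^n \twoheadrightarrow G_j$ satisfying $\varphi_j \circ P_j(A) = 0$, I would rewrite
\[
M_n(G_1,\ldots,G_l) \;=\; \sum_{(\varphi_1,\ldots,\varphi_l)} \Prob_{A}\bigl[\,\varphi_j \circ P_j(A) = 0 \text{ for all } j\,\bigr],
\]
the outer sum running over all tuples of surjections $\varphi_j\colon \Z_p^n \twoheadrightarrow G_j$. Next, I would fix such a tuple and analyze the simultaneous linear condition. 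Each equation $\varphi_j \circ P_j(A) = 0$ is $\Z_p$-linear in $A$, so defines a sub-$\Z_p$-module of $\M_n(\Z_p)$; the probability in question is therefore $p^{-r}$, where $r$ is the $\Z_p$-corank of the joint linear map $\Phi\colon A \mapsto (\varphi_j \circ P_j(A))_{j=1}^l \in \prod_j \Hom(\Z_p^n, G_j)$.

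The heart of the argument, and what I expect to be the main obstacle, is to prove a Chinese-remainder-type statement for $\Phi$: the image of $\Phi$ is ``as large as possible'' up to controlled error, so that the individual conditions decouple. This is exactly where the hypothesis that $\overline{P_1},\ldots,\overline{P_l} \in \F_p[t]$ are distinct and irreducible enters: it forces coprimality of the $\overline{P_j}$, which at the residue-field level produces the decomposition $\F_p[t]/\prod_j \overline{P_j} \cong \prod_j \F_{p^{\deg P_j}}$, and one should be able to lift this to the module-theoretic statement that the components $\varphi_j \circ P_j(A)$ vary independently as $A$ does, at least up to negligible ``boundary'' terms that vanish as $n \to \infty$. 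Lemma~\ref{lem21b}, to which the excerpt defers, presumably supplies the clean probabilistic form of this decoupling, and I would use it here to conclude that, in the limit, $\Prob_A[\,\varphi_j \circ P_j(A) = 0 \ \forall j\,]$ factors as $\prod_j \Prob_A[\,\varphi_j \circ P_j(A) = 0\,]$.

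Assuming this decoupling, the remaining work is routine. Summing the factored probabilities over tuples $(\varphi_j)$ splits $M_n$ as $\prod_j \mathbb{E}_A \#\Sur_{R_j}(\cok(P_j(A)), G_j)$; each single-polynomial moment is known to tend to $1$ as $n \to \infty$ (the single-polynomial analogue of the argument, treating $\cok(P_j(A))$ as a finite module over the complete DVR $R_j$ whose residue field has size $p^{\deg P_j}$). Therefore $M_n(G_1,\ldots,G_l) \to 1$. Finally, I would invoke the standard moment-to-distribution inversion in the Cohen--Lenstra setting (as formalized by Wood, and used in \cite{FW89} to derive \eqref{eq1a}) applied simultaneously over the finite product of categories of $R_j$-modules: the all-ones moments uniquely determine the joint limiting distribution, and matching them term by term against the distribution
$\prod_j \bigl(|\Aut_{R_j}(H_j)|^{-1}\prod_{i \geq 1}(1 - p^{-i \deg P_j})\bigr)$ completes the proof.
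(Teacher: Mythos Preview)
Your proposal contains a substantive error and leaves the central step unresolved.

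First, the claim that ``each equation $\varphi_j \circ P_j(A) = 0$ is $\Z_p$-linear in $A$'' is false whenever $\deg P_j \geq 2$: if $P_j(t) = t^2 + c$, say, then $\varphi_j(A^2 x + cx) = 0$ is quadratic in the entries of $A$, and the solution set is not a $\Z_p$-submodule of $\M_n(\Z_p)$. What you actually need is the linearization of Lemma~\ref{lem21a}: an $R_j$-module surjection $\cok(P_j(A)) \twoheadrightarrow G_j$ corresponds to a $\Z_p$-linear $\varphi_j\colon \Z_p^n \to G_j$ satisfying $\varphi_j \circ A = t \cdot \varphi_j$ (using the $R_j$-action on $G_j$), and \emph{this} condition is affine-linear in $A$. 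Your formulation conflates $\Z_p$-surjections out of $\cok(P_j(A))$ with $R_j$-surjections; only the latter give the moments in your display, and only the latter yield a linear constraint.

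Second, even after this repair, the decoupling you flag as ``the heart of the argument'' is simply assumed, and your guess about Lemma~\ref{lem21b} is wrong: that lemma only computes the probability that a Haar-random $n \times r$ matrix over $\Z_p$ has full rank modulo $p$. The paper does not use moments at all. It passes via Lemma~\ref{lem21a} to the joint distribution of $\cok_{R_j}(A - tI)$, then performs $d-1$ rounds of block elimination (with $d = \sum_j \deg P_j$), each round using Lemma~\ref{lem21b} to create a pivot block and bound the error, until the problem has been reduced to a single Haar-random element of $\M_r(R_0)$ for $R_0 = \Z_p[t]/(\prod_j P_j)$. Only at that stage does the Chinese remainder isomorphism $R_0 \cong \prod_j R_j$ of Lemma~\ref{lem21c} split the problem into $l$ independent Haar-random matrices over the $R_j$, after which the single-ring Friedman--Washington formula finishes. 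A moment argument along your lines may be workable once the linearity issue is fixed, but the independence of the constraints $\varphi_j \circ A = t\varphi_j$ across $j$ is a genuine lemma you would still have to prove, and Lemma~\ref{lem21b} does not supply it.
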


The relations between Theorem \ref{thm12} and the linearization of a random matrix model introduced by Lipnowski, Sawin and Tsimerman \cite{LST20} are discussed in Section \ref{Sec3}. 
In Section \ref{Sub31}, first we briefly explain the idea of the linearization.
After that, we provide a heuristic argument based on Theorem \ref{thm12} which suggests one possible way to understand a surprising result of \cite{LST20}: a non-linear random matrix model and its linearization have the same distribution.

Theorem \ref{thm12} is no longer true when there are two polynomials $P_{j_1}(t)$ and $P_{j_2}(t)$ with the same reduction modulo $p$. 
In this case, two cokernels $\cok(P_{j_1}(A))$ and $\cok(P_{j_2}(A))$ have the same $p$-rank so they can never be independent.
Nevertheless, it is still possible to consider the joint distribution of the cokernels $\cok(P_j(A))$ ($1 \leq j \leq l$).
Section \ref{Sub32} contains a conjectural generalization of Theorem \ref{thm12} (i.e. Conjecture \ref{conj32b}) for such polynomials, which is inspired by a heuristic argument based on the linearization.

Proposition \ref{prop11} (or Theorem \ref{thm12}) implies that for a random matrix $A \in \M_n(\Z_p)$, the cokernels $\cok(A)$ and $\cok(A+I_n)$ become independent as $n \rightarrow \infty$. 
One may ask under which conditions the cokernels $\cok(A)$ and $\cok(A+B_n)$ become independent, where $\left \{ B_n \right \}_{n \geq 1}$ is a sequence of matrices such that $B_n \in \M_n(\Z_p)$.
The second main theorem states that such independence phenomenon holds if the $p$-rank of $\cok(B_n)$ is not too close to $n$ (equivalently, the rank of $\overline{B_n} \in \M_n(\F_p)$ is not too small). 
For a finitely generated $\Z_p$-module $M$, denote its $p$-rank by $r_p(M) := \rank_{\F_p}(M /pM)$. 
\begin{theorem} \label{thm13}
(Theorem \ref{mainthm2}) Let $\left \{ B_n \right \}_{n \geq 1}$ be any sequence of matrices such that $B_n \in \M_n(\Z_p)$ and
\begin{equation} \label{eq1d}
\lim_{n \rightarrow \infty} (n - \log_p n - r_p(\cok(B_n)) ) = \infty.
\end{equation}
Then for every finite abelian $p$-groups $H_1$ and $H_2$, we have
\begin{equation} \label{eq1e}
\lim_{n \rightarrow \infty} \underset{A \in \M_{n}(\Z_p)}{\Prob}
\begin{pmatrix}
\cok(A) \cong H_1 \text{ and } \\ 
\cok(A+B_n) \cong H_2
\end{pmatrix} = \prod_{j=1}^{2} \left ( \frac{1}{\left | \Aut_{\Z_p}(H_j) \right |} \prod_{i=1}^{\infty}(1-p^{-i}) \right ).
\end{equation}
\end{theorem}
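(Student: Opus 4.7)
My plan is to prove Theorem \ref{thm13} via the surjection-moment method, as in the Cohen--Lenstra framework of \cite{Woo19} and \cite{NW22}. For each pair of finite abelian $p$-groups $H_1,H_2$, I define the joint moment
\[
M_n(H_1, H_2) := \underset{A \in \M_n(\Z_p)}{\mathbb{E}}\left[\,\#\operatorname{Surj}(\cok(A), H_1)\,\cdot\,\#\operatorname{Surj}(\cok(A + B_n), H_2)\,\right],
\]
and aim to show $M_n(H_1, H_2) \to 1$ as $n \to \infty$ for every such pair. Because the product Cohen--Lenstra measure is the unique probability measure on pairs of finite abelian $p$-groups whose joint surjection moments all equal $1$, a moment-determines-distribution argument will then upgrade this pointwise moment convergence to the joint distributional convergence claimed in \eqref{eq1e}.

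To compute $M_n$, I expand it into a double sum over pairs of surjections $\phi_j : \Z_p^n \twoheadrightarrow H_j$, and for each fixed $(\phi_1, \phi_2)$ evaluate
\[
\Prob\bigl(\phi_1 A = 0 \text{ and } \phi_2(A + B_n) = 0\bigr)
\]
column by column. Writing $A = [a_1 \,\cdots\, a_n]$ with the $a_i$ independent Haar-uniform in $\Z_p^n$, both conditions together assert that the joint map $\Phi := (\phi_1, \phi_2) : \Z_p^n \to H_1 \times H_2$ satisfies $\Phi(a_i) = (0,\, -\phi_2(B_n e_i))$ for every $i$. Setting $G := \im \Phi$, the probability therefore equals $|G|^{-n}$ when every target tuple lies in $G$, and is $0$ otherwise. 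The compatibility condition reduces to $\phi_2(\im B_n) \subseteq \phi_2(\ker \phi_1)$; equivalently, if $L \subseteq H_1$, $K \subseteq H_2$ are the Goursat slices of $G$ and $Q := H_1/L \cong H_2/K$ is the resulting common quotient, then the surviving constraint is that the induced surjection $\psi : \Z_p^n \twoheadrightarrow Q$ kills $\im B_n$, i.e.\ factors through $\cok(B_n)$.

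Grouping the sum by the Goursat data $(L, K, Q, \alpha)$ and using $|G| = |L|\,|K|\,|Q| = |H_1|\,|H_2|/|Q|$, the main term comes from $Q = 0$ (so $G = H_1 \times H_2$): the compatibility is then vacuous and the contribution is $|\operatorname{Surj}(\Z_p^n, H_1 \times H_2)|/(|H_1|\,|H_2|)^n \to 1$. For nontrivial $Q$, parametrising the compatible pairs $(\phi_1, \phi_2)$ by $\psi$ together with its lifts to $(\phi_1, \phi_2)$ bounds their contribution by
\[
\frac{|\operatorname{Surj}(\cok(B_n),\, Q)|}{|Q|^n} \;\leq\; \frac{|Q|^{r_p(\cok(B_n))}}{|Q|^n} \;\leq\; p^{\,r_p(\cok(B_n)) - n},
\]
since any $\Z_p$-module homomorphism from $\cok(B_n)$ to the finite $p$-group $Q$ is determined by the images of $r_p(\cok(B_n))$ generators. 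The number of Goursat structures admitted by a fixed pair $(H_1, H_2)$ is finite and independent of $n$, so the total error is of the same order and tends to $0$ under \eqref{eq1d}.

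The main technical obstacle is the passage from these moment estimates to the pointwise probability statement \eqref{eq1e}. The M\"obius-type inversion of the surjection moments over the poset of finite abelian $p$-groups is delicate when the underlying random group (here $\cok(A+B_n)$) depends on $n$: one needs a uniform tail bound ensuring that probability mass does not escape to arbitrarily large groups as $n \to \infty$. I expect the $\log_p n$ slack in \eqref{eq1d}, beyond the bare $r_p(\cok(B_n)) - n \to -\infty$ required for pointwise moment convergence, to be precisely what makes this inversion uniform, and verifying the resulting tail estimates simultaneously across all $(H_1, H_2)$ is the central point of the argument.
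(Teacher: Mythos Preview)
Your approach is genuinely different from the paper's. The paper never computes moments; instead it works directly with the probabilities
\[
P_{n,r}(X) := \underset{A \in \M_n(\Z_p)}{\Prob}\!\left(\cok(A)\cong H_1 \text{ and } \cok\!\left(A+\begin{pmatrix} X & O\\ O & I_{n-r}\end{pmatrix}\right)\cong H_2\right)
\]
and uses elementary row/column operations (in the spirit of the proof of Theorem~\ref{mainthm1}) to establish two facts: first, $P_{n,r}(X)$ is within $2(1-c_{n-r,r})$ of $P_{n,r}(I_r)$ uniformly in $X$, which already yields the conclusion under the stronger hypothesis $n-2r_n\to\infty$ (Proposition~\ref{prop42}); second, a one-step recursion $|P_{n,r}^{\pm}-P_{n-1,r-1}^{\pm}|\le p^{-(n-r)}+p^{-r}$ (Proposition~\ref{prop43}), which when iterated down to the regime of Proposition~\ref{prop42} accumulates an error of order $n\,p^{-(n-r_n)}$, and this is exactly where the $\log_p n$ in hypothesis~\eqref{eq1d} originates.

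Your Goursat-based moment computation is correct, and the error bound it produces, $M_n(H_1,H_2)=1+O_{H_1,H_2}(p^{\,r_n-n})$, needs only $n-r_n\to\infty$. Your expectation that the $\log_p n$ slack is what makes the moment inversion go through is very likely misplaced: once one regards the pair $(\cok A,\cok(A+B_n))$ as a single module over the product ring $\Z_p\times\Z_p$, your product surjection counts are precisely the surjection moments over that ring, the limiting moments are identically $1$, and the moment-to-distribution machinery of \cite{Woo17} (adapted to this product setting, or via later categorical generalizations) applies under pointwise moment convergence alone, since constant moments trivially satisfy the required growth condition. Thus your route, if the inversion step is carried out carefully, should actually deliver the conclusion under the conjecturally optimal hypothesis $n-r_n\to\infty$ that the paper itself states it cannot reach (see the paragraph preceding Lemma~\ref{lem44}). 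The trade-off is that the paper's argument is entirely elementary and self-contained, while yours is sharper but imports the moment-problem theorem as a black box whose adaptation to pairs you would still need to spell out.
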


We hope to extend our results to various types of random matrices over $\Z_p$. For example, one may calculate the distribution of $\cok(A^2+I_n)$ or the joint distribution of $\cok(A)$ and $\cok(A+I_n)$ for a random symmetric $n \times n$ matrix $A$ over $\Z_p$. 
Also it would be interesting to find more connections between our results and the linearization of a random matrix model, starting from the discussions in Section \ref{Sec3}.

\section{Joint distribution of the cokernels of multiple polynomials of a random $p$-adic matrix} \label{Sec2}

The purpose of this section is to prove the following theorem.

\begin{theorem} \label{mainthm1}
Let $P_1(t), \cdots, P_l(t) \in \Z_p[t]$ be monic polynomials whose reductions modulo $p$ in $\F_p[t]$ are distinct and irreducible, and let $H_j$ be a finite module over $R_j := \Z_p[t]/(P_j(t))$ for each $1 \leq j \leq l$. Then we have
\begin{equation} \label{eq20a}
\lim_{n \rightarrow \infty} \underset{A \in \M_{n}(\Z_p)}{\Prob} \begin{pmatrix}
\cok(P_j(A)) \cong H_j \\ 
\text{ for } 1 \leq j \leq l
\end{pmatrix} 
= \prod_{j=1}^{l} \left ( \frac{1}{\left | \Aut_{R_j}(H_j) \right |} \prod_{i=1}^{\infty}(1-p^{-i \deg (P_j)}) \right ).
\end{equation}
\end{theorem}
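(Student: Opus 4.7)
The plan is the standard moment method for surjection moments. For each tuple $(H_j)_{j=1}^l$ of finite $R_j$-modules, define
\[
M_n((H_j)_j) := \mathbb{E}_A\!\left[\prod_{j=1}^l \left|\operatorname{Sur}_{R_j}(\cok(P_j(A)), H_j)\right|\right],
\]
where $A$ is Haar-random in $\M_n(\Z_p)$. I claim $\lim_{n \to \infty} M_n((H_j)_j) = 1$ for every choice of $(H_j)_j$. Granting a moment-to-distribution statement (presumably provided by Lemma~\ref{lem21b}), this suffices, because the proposed limit measure on the right-hand side of~(\ref{eq20a}) is a product of Cohen--Lenstra-type distributions whose factors each have all surjection moments equal to $1$, and hence has joint surjection moments all equal to $1$.

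For the moment computation, reinterpret each factor as the number of $\Z_p$-linear surjections $\phi_j\colon \Z_p^n \twoheadrightarrow H_j$ satisfying $\phi_j \circ A = t \cdot \phi_j$ (where $t$ acts on $H_j$ via $R_j$), swap the sums over $A$ and over $(\phi_j)$, and obtain
\[
M_n((H_j)_j) = \sum_{(\phi_j)} \Prob_A\!\left[\phi_j \circ A = t \cdot \phi_j \text{ for all } j\right].
\]
Bundle $H := \prod_j H_j$ with diagonal $t$-action $T$ and $\phi := (\phi_j)\colon \Z_p^n \to H$, so that the joint constraint reads $\phi \circ A = T \circ \phi$. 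Column by column, each $Ae_k$ is forced into $\phi^{-1}(T\phi(e_k))$, which is nonempty for all $k$ iff $\im\phi$ is $T$-stable; in that case the conditional probability is $|\im\phi|^{-n}$ by independence of the columns under Haar measure.

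The key step---the main obstacle---is to show that whenever each $\phi_j$ is surjective and $\im\phi$ is $T$-stable, one has $\im\phi = H$. This uses a Chinese remainder decomposition powered by the coprimality built into the hypothesis: since $\overline{P_j}$ and $\overline{P_k}$ ($j \ne k$) are distinct irreducibles in $\F_p[t]$, $P_j(t)$ becomes a unit in each $R_k$ for $k \ne j$, so $Q_j(T) := \prod_{k \ne j} P_k(T)$ annihilates $H_k$ for $k \ne j$ while acting invertibly on $H_j$. One then produces $e_j \in \Z_p[t]$ with $e_j(T)$ acting as $\delta_{jk}\,\id$ on $H_k$; applying these quasi-idempotents to the $T$-stable submodule $\im\phi$ yields $H_j = \phi_j(\Z_p^n) \subseteq \im\phi$ for each $j$, forcing $\im\phi = H$. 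Plugging back,
\[
M_n((H_j)_j) = |H|^{-n}\cdot\left|\operatorname{Sur}_{\Z_p}(\Z_p^n, H)\right|,
\]
and the standard Nakayama-plus-linear-algebra count $|\operatorname{Sur}_{\Z_p}(\Z_p^n, H)|/|H|^n = \prod_{i=0}^{r_p(H)-1}(1-p^{i-n}) \to 1$ completes the computation; Lemma~\ref{lem21b} then delivers the theorem. Everything outside the CRT decomposition is bookkeeping or a citation of standard moment machinery.
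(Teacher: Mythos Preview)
Your moment computation is correct and the route is genuinely different from the paper's. The reinterpretation of $\operatorname{Sur}_{R_j}(\cok(P_j(A)),H_j)$ as $\Z_p$-linear surjections $\phi_j$ intertwining $A$ with the $t$-action, the column-by-column probability $|\im\phi|^{-n}$, and the CRT argument that surjectivity of each $\phi_j$ together with $T$-stability of $\im\phi$ forces $\im\phi=H$ all go through, yielding $M_n((H_j)_j)=|H|^{-n}\cdot|\operatorname{Sur}_{\Z_p}(\Z_p^n,H)|\to 1$. The one real gap is your appeal to Lemma~\ref{lem21b} for the last step: that lemma has nothing to do with moments---it computes the probability that a Haar-random matrix in $\M_{n\times r}(\Z_p)$ has full rank mod $p$, namely $\prod_{j=0}^{r-1}(1-p^{-(n-j)})$. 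What you actually need is a moments-to-distribution theorem for finite modules over the product of DVRs $R_0=\prod_j R_j$, and that is not in this paper. The argument of \cite[Theorem~8.3]{Woo17} for finite abelian groups adapts readily (the poset of finite $R_0$-modules is a product of partition posets), and since the target moments are identically $1$ the relevant well-posedness hypothesis is trivial, but you must source this step externally.

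The paper's own proof never computes a moment. It passes to $\cok_{R_j}(A-tI)$ via Lemma~\ref{lem21a} and then uses Lemma~\ref{lem21b} in an entirely different way: with high probability a designated block of $A$ row-reduces to $\begin{pmatrix} I_r \\ O \end{pmatrix}$, enabling an elementary-operations step that shrinks $n$ by $r$ while raising by one the top $t$-power in a running matrix $M_{A,[B_1,\dots,B_k]}$. After $d-1$ such steps (with $d=\sum_j\deg P_j$) and one final reduction, the problem becomes that of a Haar-random matrix over $R_0$, which Lemma~\ref{lem21c} (CRT) splits into independent Haar matrices over the $R_j$; the single-ring formula from \cite{CH21} then finishes. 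Your approach is shorter and more conceptual, outsourcing the analytic endgame to existing moment machinery; the paper's is fully self-contained with explicit finite-$n$ error terms. Both hinge on the same pairwise-comaximality of the $P_j$, but deploy it differently: you use it to force $\im\phi=H$, the paper uses it for the ring splitting $R_0\cong\prod_j R_j$.
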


\subsection{Some lemmas} \label{Sub21}

Before starting the proof, we provide some lemmas that will be useful for the proof. The first lemma provides a linear description of the cokernel of a matrix polynomial. 

\begin{lemma} \label{lem21a}
(cf. \cite[Lemma 3.2]{CK22}) For each $j$, the map 
$$
\Z_p^n/P_j(A) \Z_p^n = \cok(P_j(A)) \rightarrow \cok_{R_j}(A-tI) := R_j^n / (A-tI)R_j^n \;\; (\overline{x} \mapsto \overline{x})
$$
($A \in \M_n(\Z_p)$, $A-tI \in \M_n(R_j)$) is an $R_j$-module isomorphism.
\end{lemma}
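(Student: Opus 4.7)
The plan is to realize the stated map as a base-change map. View $\Z_p^n$ as a $\Z_p[t]$-module in which $t$ acts by $A$. The structural input is the presentation
\[
\Z_p[t]^n \xrightarrow{\, tI-A \,} \Z_p[t]^n \longrightarrow \Z_p^n \longrightarrow 0
\]
as $\Z_p[t]$-modules, where the second map sends the standard basis vector $e_i$ of the free module to $e_i\in\Z_p^n$. This is the presentation that underlies rational canonical form over a field, and it carries over verbatim to $\Z_p[t]$: surjectivity is immediate, and for the kernel one writes a generic element $\sum_k t^k v_k\in\Z_p[t]^n$ with $v_k\in\Z_p^n$ and uses the telescoping identity
\[
t^k v_k - A^k v_k \;=\; (tI-A)\bigl(t^{k-1}I + t^{k-2}A + \cdots + A^{k-1}\bigr)v_k
\]
to show $\sum_k t^k v_k \equiv \sum_k A^k v_k \pmod{(tI-A)\Z_p[t]^n}$, which lies in $\Z_p^n$ and vanishes there precisely when the original element was in the kernel.

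I would then tensor this presentation over $\Z_p[t]$ with $R_j = \Z_p[t]/(P_j(t))$. Right-exactness of $-\otimes_{\Z_p[t]} R_j$ yields
\[
R_j^n \xrightarrow{\, tI-A \,} R_j^n \longrightarrow \Z_p^n \otimes_{\Z_p[t]} R_j \longrightarrow 0,
\]
and on the right $\Z_p^n\otimes_{\Z_p[t]} R_j = \Z_p^n / P_j(t)\Z_p^n = \Z_p^n / P_j(A)\Z_p^n = \cok(P_j(A))$ because $t$ acts as $A$. This identifies $\cok_{R_j}(A-tI)$ (the sign is irrelevant for cokernels) with $\cok(P_j(A))$ as $R_j$-modules. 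Chasing the standard basis vector $e_i$ through the construction shows that the resulting isomorphism is literally $\overline{x}\mapsto\overline{x}$.

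The only loose ends are well-definedness and $R_j$-linearity of the naive map $\overline{x}\mapsto\overline{x}$ on the $\Z_p$-side, and neither is a serious obstacle. Well-definedness follows from the polynomial identity $P_j(x)-P_j(t)=(x-t)Q(x,t)$ in $\Z_p[x,t]$: specializing the commuting variable $x=A$ gives $P_j(A) = (A-tI)Q(A,t) + P_j(t)\,I$, so $P_j(A)\Z_p^n \subseteq (A-tI)R_j^n$ after reducing modulo $P_j(t)$. For $R_j$-linearity one need only check $t$-equivariance: on the source $t\cdot\overline{x}=\overline{Ax}$ by the defining action, while on the target $t\cdot\overline{x}=\overline{tx}=\overline{Ax}$ using that $(A-tI)x\equiv 0$ in $R_j^n/(A-tI)R_j^n$. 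Thus the content of the lemma is essentially the companion-matrix presentation of $\Z_p^n$ together with base change, and there is no substantive obstacle beyond verifying that presentation carefully over $\Z_p[t]$.
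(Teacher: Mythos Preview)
Your argument is correct: the companion-matrix presentation $\Z_p[t]^n \xrightarrow{tI-A} \Z_p[t]^n \to \Z_p^n \to 0$ (with $t$ acting as $A$ on $\Z_p^n$) is valid over $\Z_p[t]$ for the reasons you indicate, and right-exactness of $-\otimes_{\Z_p[t]} R_j$ then gives the isomorphism, which one checks is the naive map $\overline{x}\mapsto\overline{x}$.

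There is nothing to compare against in the paper itself: Lemma~\ref{lem21a} is stated without proof and attributed to \cite[Lemma~3.2]{CK22}. Your base-change proof is a clean, self-contained way to recover the cited result.
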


For $A \in \M_n(\Z_p)$ and $P, Q \in \GL_n(\Z_p)$, we have $\cok(A) \cong \cok(PAQ)$ so the elementary operations do not change the cokernel of a matrix. The key tool in the proof of Theorem \ref{mainthm1} is the reduction of the size of a matrix by elementary operations without affecting the cokernels. The next lemma will be repeatedly used in this procedure. 

\begin{lemma} \label{lem21b}
For any integers $n \geq r > 0$ and a Haar-random matrix $C \in \M_{n \times r}(\Z_p)$, we have
$$
\underset{C \in \M_{n \times r}(\Z_p)}{\Prob} \left ( \text{there exists } Y \in \GL_{n}(\Z_p) \text{ such that } YC = \begin{pmatrix}
I_r \\
O
\end{pmatrix} \right ) = c_{n,r} := \prod_{j=0}^{r-1} \left ( 1 - \frac{1}{p^{n-j}} \right ).
$$
\end{lemma}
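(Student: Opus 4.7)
The plan is to reduce the $p$-adic problem to a linear algebra count over $\F_p$. The key observation is that the existence of $Y \in \GL_n(\Z_p)$ with $YC = \begin{pmatrix} I_r \\ O \end{pmatrix}$ is equivalent to the reduction $\overline{C} \in \M_{n \times r}(\F_p)$ having full column rank $r$. Once this equivalence is established, the probability becomes a standard count of full-rank matrices over the finite field $\F_p$.

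First I would prove the equivalence. The ``only if'' direction is immediate: reducing $YC = \begin{pmatrix} I_r \\ O \end{pmatrix}$ modulo $p$ gives $\overline{Y}\, \overline{C} = \begin{pmatrix} I_r \\ O \end{pmatrix}$ with $\overline{Y} \in \GL_n(\F_p)$, so $\overline{C}$ has rank $r$. For the ``if'' direction, suppose the columns $c_1, \dots, c_r$ of $C$ are linearly independent modulo $p$. By Nakayama's lemma applied to $\Z_p^n$, they form part of a $\Z_p$-basis of $\Z_p^n$; extending to a basis $c_1, \dots, c_r, v_{r+1}, \dots, v_n$ and letting $Y$ be the inverse of the matrix with these columns yields $Y \in \GL_n(\Z_p)$ with $YC = \begin{pmatrix} I_r \\ O \end{pmatrix}$.

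Next I would compute the probability that $\overline{C} \in \M_{n \times r}(\F_p)$ has rank $r$. Since the entries of $C$ are Haar-random over $\Z_p$, the reduction $\overline{C}$ is uniform over $\M_{n \times r}(\F_p)$. Building up column by column, the $j$-th column avoids the $\F_p$-span of the first $j-1$ columns (of size $p^{\,j-1}$) with probability $1 - p^{\,j-1}/p^n = 1 - p^{-(n-j+1)}$. Multiplying these probabilities for $j = 1, \dots, r$ gives
$$
\prod_{j=1}^{r}\bigl(1 - p^{-(n-j+1)}\bigr) = \prod_{j=0}^{r-1}\bigl(1 - p^{-(n-j)}\bigr) = c_{n,r},
$$
which is the desired formula.

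The only subtle point is the ``if'' direction of the equivalence, namely that full rank modulo $p$ suffices to lift the columns to part of a $\Z_p$-basis; this is a standard consequence of Nakayama, so no serious obstacle is anticipated. The remainder of the argument is a textbook counting identity.
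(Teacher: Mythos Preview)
Your proposal is correct and follows essentially the same approach as the paper: both reduce the question to whether $\overline{C}\in\M_{n\times r}(\F_p)$ has full column rank, then count such matrices to obtain $c_{n,r}$. The only cosmetic difference is in the ``if'' direction of the equivalence---you invoke Nakayama to extend the columns to a $\Z_p$-basis, while the paper lifts a suitable $Y_0\in\GL_n(\F_p)$ to $Y_1\in\GL_n(\Z_p)$ and then corrects by an explicit block-triangular matrix; both arguments are standard and equivalent.
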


\begin{proof}
For a random $C \in \M_{n \times r}(\Z_p)$, its modulo $p$ reduction $C_0$ is a random element of $\M_{n \times r}(\F_p)$. 
Since the probability that $\rank_{\F_p}(C_0) = r$ is given by $c_{n,r}$, it is enough to show that we can always find $Y \in \GL_n(\Z_p)$ such that $YC = \begin{pmatrix}
I_r \\
O
\end{pmatrix}$ when $\rank_{\F_p}(C_0) = r$. If $C_0$ has the full rank, there exists $Y_0 \in \GL_n(\F_p)$ such that $Y_0C_0 = \begin{pmatrix}
I_r \\
O
\end{pmatrix}$. Choose any matrix $Y_1 \in \GL_n(\Z_p)$ whose reduction modulo $p$ is $Y_0$ and write $Y_1C = \begin{pmatrix}
W_1 \\
W_2
\end{pmatrix} \in \M_{(r+(n-r)) \times r}(\Z_p)$. Since $W_1 \equiv I_r \;\; (\text{mod } p)$, the determinant of $W_1$ is unit in $\Z_p$ so $W_1 \in \GL_r(\Z_p)$. Now we have
  $$\left(\begin{array}{@{}c|c@{}}
    W_1^{-1} & O \\\hline
    -W_2W_1^{-1} & I_{n-r}
  \end{array}\right) Y_1 C = \begin{pmatrix}
I_r \\
O
\end{pmatrix},
  $$
which finishes the proof.
\end{proof}

\begin{lemma} \label{lem21c}
For $R_0 := \Z_p[t]/(P_1(t) \cdots P_l(t))$, the map
$$
\M_n(R_0) \rightarrow \prod_{j=1}^{l} \M_n(R_j) \;\; (A \mapsto (\overline{A}, \cdots, \overline{A}))
$$
is a ring isomorphism.
\end{lemma}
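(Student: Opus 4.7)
The plan is to recognize this as the Chinese Remainder Theorem in disguise. Since $\M_n(-)$ carries finite direct products of rings to the corresponding products of matrix rings, i.e.\ $\M_n\bigl(\prod_j R_j\bigr) \cong \prod_j \M_n(R_j)$ as rings via the entrywise map, it suffices to prove that the natural ring homomorphism $\phi \colon R_0 \to \prod_{j=1}^l R_j$, $\overline{f} \mapsto (\overline{f}, \dots, \overline{f})$, is a ring isomorphism. Applying $\M_n(-)$ afterwards gives exactly the map in the statement.

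To handle $\phi$, I would first check that both sides are free $\Z_p$-modules of the same finite rank. Because $P_1(t) \cdots P_l(t)$ is monic of degree $d := \sum_{j=1}^l \deg P_j$, polynomial division gives that $R_0$ is $\Z_p$-free with basis $\{1, t, \dots, t^{d-1}\}$; similarly each $R_j$ is $\Z_p$-free of rank $\deg P_j$, so $\prod_j R_j$ is $\Z_p$-free of rank $d$. Thus $\phi$ is a $\Z_p$-linear map between free $\Z_p$-modules of the same rank, and I only need to verify it is bijective.

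Reducing mod $p$, the map $\phi$ becomes $\F_p[t]/(\overline{P_1} \cdots \overline{P_l}) \to \prod_j \F_p[t]/(\overline{P_j})$. By hypothesis the $\overline{P_j}$ are distinct monic irreducibles in the PID $\F_p[t]$, hence pairwise coprime, so the classical CRT gives that this reduced map is an isomorphism. I then invoke the standard fact that a $\Z_p$-linear map between two free $\Z_p$-modules of the same finite rank is an isomorphism iff its reduction mod $p$ is an isomorphism (equivalently, its determinant lies in $\Z_p^{\times}$ iff it is nonzero in $\F_p$). This promotes the mod-$p$ isomorphism to an integral one, and since $\phi$ is a ring homomorphism, it is in fact a ring isomorphism.

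There is no serious obstacle: the only subtlety is that $\Z_p[t]$ is not a PID, so one cannot quote CRT over $\Z_p[t]$ directly. The mod-$p$ reduction combined with the freeness of both sides as $\Z_p$-modules neatly sidesteps this issue.
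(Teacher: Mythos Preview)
Your argument is correct. Both you and the paper reduce the statement to showing that the natural map $R_0 \to \prod_j R_j$ is a ring isomorphism, but you take different routes from there. The paper shows directly that the ideals $(P_i)$ and $(P_j)$ are comaximal in $\Z_p[t]$ for $i \neq j$: since the reductions $\overline{P_i}, \overline{P_j} \in \F_p[t]$ are coprime, their resultant (an element of $\Z_p$) is a unit, and the resultant can be written as a $\Z_p[t]$-linear combination of $P_i$ and $P_j$; the Chinese Remainder Theorem then applies over $\Z_p[t]$ with no further work. Your approach instead checks that both sides are free $\Z_p$-modules of the same rank, reduces mod $p$, applies CRT over $\F_p[t]$, and lifts the isomorphism via the determinant/Nakayama criterion. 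One small correction: your remark that ``$\Z_p[t]$ is not a PID, so one cannot quote CRT directly'' is a bit misleading --- CRT holds over any commutative ring once the ideals are pairwise comaximal, which is precisely what the paper's resultant argument supplies. Your route simply trades the resultant fact for a Nakayama-style lift; both are short and equally clean.
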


\begin{proof}
Since $\overline{P_i}$ and $\overline{P_j}$ have no common factor, the resultant of $P_i$ and $P_j$ is a unit in $\Z_p$. The resultant of $P_i$ and $P_j$ is a $\Z_p[t]$-linear combination of $P_i$ and $P_j$ so there are $c_i, c_j \in \Z_p[t]$ such that $c_iP_i+c_jP_j=1$. Since the ideals $(P_1), \cdots, (P_l)$ in $\Z_p[t]$ are pairwise coprime, the map
$$
R_0 \rightarrow \prod_{j=1}^{l} R_j \;\; (x \mapsto (x+(P_1), \cdots, x+(P_l)))
$$
is a ring isomorphism by the Chinese remainder theorem. Therefore the map $\displaystyle \M_n(R_0) \rightarrow \prod_{j=1}^{l} \M_n(R_j)$ is also a ring isomorphism. 
\end{proof}

\subsection{Proof of Theorem \ref{mainthm1}} \label{Sub22}

By Lemma \ref{lem21a}, Theorem \ref{mainthm1} is equivalent to 
\begin{equation} \label{eq22a}
\lim_{n \rightarrow \infty} \underset{A \in \M_{n}(\Z_p)}{\Prob} \begin{pmatrix}
\cok_{R_j}(A-tI) \cong H_j \\ 
\text{ for } 1 \leq j \leq l
\end{pmatrix} 
= \prod_{j=1}^{l} \left ( \frac{1}{\left | \Aut_{R_j}(H_j) \right |} \prod_{i=1}^{\infty}(1-p^{-i \deg (P_j)}) \right ).
\end{equation}
For $M \in \M_n(\Z_p[t])$, denote its image in $\M_n(R_j)$ by $M(j)$. For any $n>r>0$ and $k \geq 0$, denote
$$
P_{n, r}^{k} := \underset{\substack{A \in \M_n(\Z_p) \\ B_1, \cdots, B_k \in \M_{r \times n}(\Z_p)}}{\Prob} \begin{pmatrix}
\cok_{R_j}(M_{A, \, [B_1, \cdots, B_k]}(j)) \cong H_j \\ 
\text{ for } 1 \leq j \leq l
\end{pmatrix} 
$$
for
$$
M_{A, \, [B_1, \cdots, B_k]} := A + \begin{pmatrix}
\sum_{i=1}^{k} t^iB_i \\ 
O_{(n-r) \times n}
\end{pmatrix}
- \begin{pmatrix}
t^{k+1} I_r & O \\ 
O & t I_{n-r}
\end{pmatrix} \in \M_n(\Z_p[t]).
$$
It is obvious that
\begin{equation} \label{eq22b}
P_{n, r}^{0} = \underset{A \in \M_{n}(\Z_p)}{\Prob} \begin{pmatrix}
\cok_{R_j}(A-tI) \cong H_j \\ 
\text{ for } 1 \leq j \leq l
\end{pmatrix}. 
\end{equation}

\noindent For $n > 2r > 0$, $A = \begin{pmatrix}
A_1 & A_2 \\ 
A_3 & A_4
\end{pmatrix} \in \M_{r+(n-r)}(\Z_p)$, $B_1, \cdots, B_k \in \M_{r \times n}(\Z_p)$ and 
  $$
  U = \begin{pmatrix}
    I_r & O \\
    O & U_1
  \end{pmatrix} \in \GL_{n}(\Z_p) \;\; (U_1 \in \GL_{n-r}(\Z_p)),
  $$
we have
\begin{equation*}
U M_{A, \, [B_1, \cdots, B_k]} U^{-1} 
= M_{ A', \,  [B_1', \cdots, B_k']}
\end{equation*}
for
\begin{equation*}
A' = \begin{pmatrix}
    A_1 & A_2U_1^{-1} \\
    U_1A_3 & U_1A_4U_1^{-1}
  \end{pmatrix}, \, B_i'=B_i U^{-1}
\end{equation*}
by a direct computation. 
For a random $A_3$, the probability that there exists $U_1 \in \GL_{n-r}(\Z_p)$ such that $U_1A_3 = \begin{pmatrix}
I_r \\
O
\end{pmatrix}$ is at least $c_{n-r, r}$ by Lemma \ref{lem21b}. Moreover, for any given $A_3$ and $U_1$, the matrices $A_2U_1^{-1}$, $U_1A_4U_1^{-1}$ and $B_iU^{-1}$ ($1 \leq i \leq k$) are random and independent if and only if the matrices $A_2$, $A_4$ and $B_i$ are random and independent. 
These imply that
\begin{equation} \label{eq22c}
\left | P_{n, r}^{k} - \widetilde{P}_{n, r}^{k} \right | \leq (1-c_{n-r, r})
\end{equation}
for
$$
\widetilde{\M}_{n, r}(\Z_p) := \left \{ \begin{pmatrix}
A_1 & A_2 & A_3\\ 
I_r & A_4 & A_5\\ 
O & A_6 & A_7
\end{pmatrix} \in \M_{r+r+(n-2r)}(\Z_p) \right \} \subset \M_n(\Z_p)$$
and
$$
\widetilde{P}_{n, r}^{k} := \underset{\substack{A \in \widetilde{\M}_{n, r}(\Z_p) \\ B_1, \cdots, B_k \in \M_{r \times n}(\Z_p)}}{\Prob} \begin{pmatrix}
\cok_{R_j}(M_{A, \, [B_1, \cdots, B_k]}(j)) \cong H_j \\ 
\text{ for } 1 \leq j \leq l
\end{pmatrix} .
$$
Let
$$
A = \begin{pmatrix}
A_1 & A_2 & A_3 \\ 
I_r & A_4 & A_5 \\ 
O & A_6 & A_7 
\end{pmatrix} \in \widetilde{\M}_{n, r}(\Z_p)
$$
and 
$$
B_i = \begin{pmatrix}
X_i & Y_i & Z_i
\end{pmatrix} \in \M_{r \times (r+r+(n-2r))}(\Z_p)
$$
for $1 \leq i \leq k$. By elementary operations, we have
\begin{equation*}
\small
\begin{split}
& \; M_{A, \, [B_1, \cdots, B_k]} \\
= & \, \left(\begin{array}{@{}c|c|c@{}}
    A_1 + \sum_{i=1}^{k} t^iX_i - t^{k+1}I_r & A_2 + \sum_{i=1}^{k} t^iY_i & A_3 + \sum_{i=1}^{k} t^iZ_i \\\hline
I_r & A_4-tI_r & A_5 \\\hline
    O & A_6 & A_7-tI_{n-2r} 
  \end{array}\right) \\
\Rightarrow & \, \left(\begin{array}{@{}c|c|c@{}}
    O & (A_2 + \sum_{i=1}^{k} t^iY_i)-(A_1 + \sum_{i=1}^{k} t^iX_i - t^{k+1}I_r)(A_4-tI_r) & (A_3 + \sum_{i=1}^{k} t^iZ_i)-(A_1 + \sum_{i=1}^{k} t^iX_i - t^{k+1}I_r)A_5 \\\hline
    I_r & A_4-tI_r & A_5 \\\hline
    O & A_6 & A_7-tI_{n-2r} 
  \end{array}\right) \\
\Rightarrow & \, \left(\begin{array}{@{}c|c@{}}
    (A_2 + \sum_{i=1}^{k} t^iY_i)-(A_1 + \sum_{i=1}^{k} t^iX_i - t^{k+1}I_r)(A_4-tI_r) & (A_3 + \sum_{i=1}^{k} t^iZ_i)-(A_1 + \sum_{i=1}^{k} t^iX_i - t^{k+1}I_r)A_5 \\ \hline
    A_6 & A_7-tI_{n-2r} 
  \end{array}\right) \\
= & \, \left(\begin{array}{@{}c|c@{}}
    A_2-A_1A_4 & A_3-A_1A_5 \\ \hline
    A_6 & A_7
  \end{array}\right) \\
  & + \left(\begin{array}{@{}c|c@{}}
  t(Y_1-X_1A_4+A_1)
  + \sum_{i=2}^{k} t^i(Y_i-X_iA_4+X_{i-1})
  + t^{k+1}(A_4+X_k)
  & \sum_{i=1}^{k} t^i(Z_i-X_iA_5) + t^{k+1}A_5 \\ \hline
  O & O
  \end{array}\right) \\
  & - \left(\begin{array}{@{}c|c@{}}
    t^{k+2}I_r & O \\ \hline
    O & tI_{n-2r} 
  \end{array}\right) \\
  =: & \, M_{A', \, [B_1', \cdots, B_{k+1}']}.
\end{split}
\end{equation*}
Since the elementary operations do not change the cokernel, we have
$$
\cok(M_{A, \, [B_1, \cdots, B_k]}(j)) \cong \cok(M_{A', \, [B_1', \cdots, B_{k+1}']}(j))
$$
for all $j$. 
The matrices $B_1', \cdots, B_{k}'$ are given by $B_i' = \begin{pmatrix}
Y_i & Z_i
\end{pmatrix} + N_i$ ($1 \leq i \leq k$) for some $N_1, \cdots, N_{k} \in \M_{r \times (n-r)}(\Z_p)$ depending only on $A_1$, $A_4$, $A_5$ and $X_i$ ($1 \leq i \leq k$). Similarly, $\displaystyle A' = \begin{pmatrix}
A_2 & A_3 \\
A_6 & A_7 \\
\end{pmatrix} + N$ for some $N \in \M_{n-r}(\Z_p)$ depending only on $A_1$, $A_4$ and $A_5$ and we also have $B_{k+1}' = \begin{pmatrix}
A_4 & A_5
\end{pmatrix} + \begin{pmatrix}
X_k & O
\end{pmatrix}$. Therefore $A', B_1', \cdots, B_{k+1}'$ are random and independent if $Y_i$, $Z_i$ ($1 \leq i \leq k$), $A_j$ ($2 \leq j \leq 7$) are random and independent, or $A, B_1, \cdots, B_k$ are random and independent. This implies that
\begin{equation} \label{eq22d}
\widetilde{P}_{n, r}^{k} = P_{n-r, r}^{k+1}.
\end{equation}
By the equations (\ref{eq22c}) and (\ref{eq22d}), we have
\begin{equation} \label{eq22e}
\begin{split}
\left | P_{n,r}^{0} - P_{n-(d-1)r,r}^{d-1} \right |
& \leq \sum_{i=1}^{d-1} \left | P_{n-(i-1)r,r}^{i-1} - P_{n-ir,r}^{i} \right | \\
& \leq \sum_{i=1}^{d-1} (1 - c_{n-ir, r})
\end{split}
\end{equation}
for $d := \deg (P_1) + \cdots + \deg (P_l)$ and $n>dr$. 

Now let $r>s>0$ and consider the probability $P_{r+s, r}^{d-1}$. For $A = \begin{pmatrix}
A_1 & A_2 \\
    A_3 & A_4
\end{pmatrix} \in \M_{r+s}(\Z_p)$, $B_i = \begin{pmatrix}
X_i & Y_i
\end{pmatrix} \in \M_{r \times (r+s)}(\Z_p)$ and 
  $$
  U = \begin{pmatrix}
    U_2 & O \\
    O & I_{s}
  \end{pmatrix} \in \GL_{r+s}(\Z_p) \;\; (U_2 \in \GL_{r}(\Z_p)),
  $$
we have
\begin{equation*}
U M_{A, \, [B_1, \cdots, B_{d-1}]} U^{-1} 
= M_{ A', \,  [B_1', \cdots, B_{d-1}']}
\end{equation*}
for
\begin{equation*}
A' = \begin{pmatrix}
    U_2A_1U_2^{-1} & U_2A_2 \\
    A_3U_2^{-1} & A_4
  \end{pmatrix}, \, B_i'= \begin{pmatrix}
U_2X_iU_2^{-1} & U_2Y_i
\end{pmatrix}.
\end{equation*}
For a random $A_3$, the probability that there exists $U_2 \in \GL_{r}(\Z_p)$ such that $A_3U_2^{-1} = \begin{pmatrix}
I_{s} & O
\end{pmatrix}$ is at least $c_{r, s}$ by Lemma \ref{lem21b}. Moreover, for any given $A_3$ and $U_2$, the matrices $U_2A_1U_2^{-1}$, $U_2A_2$, $U_2X_iU_2^{-1}$ and $U_2Y_i$ ($1 \leq i \leq d-1$) are random and independent if and only if the matrices $A_1$, $A_2$, $X_i$ and $Y_i$ are random and independent. These imply that
\begin{equation} \label{eq22f}
\left | P_{r+s, r}^{d-1} - \widehat{P}_{r+s, s}^{d-1} \right | \leq 1-c_{r, s}
\end{equation}
for
$$
\widehat{\M}_{r+s, s}(\Z_p) := \left \{ \begin{pmatrix}
A_1 & A_2 & A_3\\ 
I_{s} & O & A_4
\end{pmatrix} \in \M_{(r+s) \times (s+(r-s)+s)}(\Z_p) \right \} \subset \M_{r+s}(\Z_p)$$
and
$$
\widehat{P}_{r+s, s}^{d-1} := \underset{\substack{A \in \widehat{\M}_{r+s, s}(\Z_p) \\ B_1, \cdots, B_{d-1} \in \M_{r \times (r+s)}(\Z_p)}}{\Prob} \begin{pmatrix}
\cok_{R_j}(M_{A, \, [B_1, \cdots, B_{d-1}]}(j)) \cong H_j \\ 
\text{ for } 1 \leq j \leq l
\end{pmatrix}.
$$
Let
$$
A = \begin{pmatrix}
A_1 & A_2 & A_3\\ 
A_4 & A_5 & A_6\\
I_{s} & O & A_7
\end{pmatrix} \in \widehat{\M}_{r+s, s}(\Z_p) \subset \M_{s+(r-s)+s}(\Z_p)
$$
and
$$
B_i = \begin{pmatrix}
B_{i, 1} & B_{i, 2} & B_{i, 3}\\
B_{i, 4} & B_{i, 5} & B_{i, 6}
\end{pmatrix} \in \M_{(s+(r-s)) \times (s+(r-s)+s)}(\Z_p)
$$ 
for $1 \leq i \leq d-1$. By elementary operations, we have
\begin{equation*}
\small
\begin{split}
& \; M_{A, \, [B_1, \cdots, B_{d-1}]} \\
= & \, \left(\begin{array}{@{}c|c|c@{}}
    A_1 + \sum_{i=1}^{d-1} t^iB_{i, 1} - t^{d}I_s & A_2 + \sum_{i=1}^{d-1} t^iB_{i, 2} & A_3 + \sum_{i=1}^{d-1} t^iB_{i, 3} \\\hline
A_4 + \sum_{i=1}^{d-1} t^iB_{i, 4} & A_5 + \sum_{i=1}^{d-1} t^iB_{i, 5} -t^{d}I_{r-s} & A_6+ \sum_{i=1}^{d-1} t^iB_{i, 6} \\\hline
    I_s & O & A_7-tI_{s} 
  \end{array}\right) \\
\Rightarrow & \, \left(\begin{array}{@{}c|c|c@{}}
    A_1 + \sum_{i=1}^{d-1} t^iB_{i, 1} - t^{d}I_s & A_2 + \sum_{i=1}^{d-1} t^iB_{i, 2} & (A_3 + \sum_{i=1}^{d-1} t^iB_{i, 3}) - (A_1 + \sum_{i=1}^{d-1} t^iB_{i, 1} - t^{d}I_s)(A_7-tI_{s}) \\\hline
A_4 + \sum_{i=1}^{d-1} t^iB_{i, 4} & A_5 + \sum_{i=1}^{d-1} t^iB_{i, 5} -t^{d}I_{r-s} & (A_6 + \sum_{i=1}^{d-1} t^iB_{i, 6}) - (A_4 + \sum_{i=1}^{d-1} t^iB_{i, 4})(A_7-tI_{s}) \\\hline
    I_s & O & O
  \end{array}\right) \\
\Rightarrow & \, \left(\begin{array}{@{}c|c@{}}
    A_2 + \sum_{i=1}^{d-1} t^iB_{i, 2} & (A_3 + \sum_{i=1}^{d-1} t^iB_{i, 3}) - (A_1 + \sum_{i=1}^{d-1} t^iB_{i, 1} - t^{d}I_s)(A_7-tI_{s}) \\\hline
A_5 + \sum_{i=1}^{d-1} t^iB_{i, 5} -t^{d}I_{r-s} & (A_6 + \sum_{i=1}^{d-1} t^iB_{i, 6}) - (A_4 + \sum_{i=1}^{d-1} t^iB_{i, 4})(A_7-tI_{s})
  \end{array}\right) \\
= & \, \left(\begin{array}{@{}c|c@{}}
    A_2 & A_3-A_1A_7 \\ \hline
    A_5 & A_6-A_4A_7
  \end{array}\right) + t \left(\begin{array}{@{}c|c@{}}
    B_{1,2} & B_{1, 3}+A_1-B_{1,1}A_7 \\ \hline
    B_{1,5} & B_{1, 6}+A_4-B_{1,4}A_7
  \end{array}\right) \\
  & + \sum_{i=2}^{d-1} t^i \left(\begin{array}{@{}c|c@{}}
  B_{i, 2} & B_{i,3}-B_{i,1}A_7+B_{i-1,1} \\ \hline
  B_{i, 5} & B_{i,6}-B_{i,4}A_7+B_{i-1,4}
  \end{array}\right) + \left(\begin{array}{@{}c|c@{}}
    O & t^d(B_{d-1, 1}+A_7)-t^{d+1}I_s \\ \hline
    -t^d I_{r-s} & t^dB_{d-1, 4}
  \end{array}\right) \\
  = & \, \sum_{i=0}^{d-1}t^iC_i + t^d \begin{pmatrix} O & E
\end{pmatrix} 
+ \begin{pmatrix}  
O & -t^{d+1}I_s \\
-t^d I_{r-s} & O
\end{pmatrix} \\
=: & \, N_{[C_0, \cdots, C_{d-1}], E} \in \M_r(\Z_p[t])
\end{split}
\end{equation*}
for $C_0, \cdots, C_{d-1} \in \M_{r}(\Z_p)$ and $E \in \M_{r \times s}(\Z_p)$. 
For given $A_1$, $A_4$, $A_7$, $B_{i,1}$ and $B_{i,4}$ ($1 \leq i \leq d-2$), the matrices $C_0, \cdots, C_{d-1}, E$ are random and independent if $A_j$, $B_{i, j}$ ($1 \leq i \leq d-1$, $j \in \left\{ 2, 3, 5, 6 \right\}$), $B_{d-1, 1}$ and $B_{d-1, 4}$ are random and independent. 
In this case, the image of $N_{[C_0, \cdots, C_{d-1}], E}$ in $\M_r(R_0)$ is also random so Lemma \ref{lem21c} implies that 
$$
N_{[C_0, \cdots, C_{d-1}], E}(j) \in \M_r(R_j) \;\; (1 \leq j \leq l)
$$
are random and independent. Therefore
\begin{equation} \label{eq22g}
\begin{split}
\widehat{P}_{r+s, s}^{d-1} 
& = \underset{\substack{C_0, \cdots, C_{d-1} \in \M_{r}(\Z_p) \\ E \in \M_{r \times s}(\Z_p)}}{\Prob} \begin{pmatrix}
\cok_{R_j}(N_{[C_0, \cdots, C_{d-1}], E}(j)) \cong H_j \\ 
\text{ for } 1 \leq j \leq l 
\end{pmatrix} \\
& = \prod_{j=1}^{l} \left ( \underset{N_j \in \M_{r}(R_j)}{\Prob} (\cok(N_j) \cong H_j)  \right ).
\end{split}
\end{equation}
(Each $N_j$ is a Haar-random matrix in $\M_{r}(R_j)$.) By the equations (\ref{eq22f}) and (\ref{eq22g}), we have
\begin{equation} \label{eq22h}
\left | P_{r+s, r}^{d-1} - \prod_{j=1}^{l} \left ( \underset{N_j \in \M_{r}(R_j)}{\Prob} (\cok(N_j) \cong H_j)  \right ) \right | \leq 1-c_{r, s}.
\end{equation}

Now let $(t_n)_{n \geq 1}$ be any sequence of non-negative integers such that 
$$
\lim_{n \rightarrow \infty} (n-dt_n) = \lim_{n \rightarrow \infty} ((d+1)t_n-n) = \infty
$$
and denote $s_n := n - dt_n$. (For example, we can choose $\displaystyle t_n = \left \lfloor \frac{n}{d+1/2} \right \rfloor$.) Assume that $n$ is sufficiently large so that both $s_n$ and $t_n-s_n$ are positive. By the equations (\ref{eq22b}), (\ref{eq22e}) and (\ref{eq22h}), we have
\begin{equation*}
\begin{split}
& \left | \underset{A \in \M_{n}(\Z_p)}{\Prob} \begin{pmatrix}
\cok_{R_j}(A-tI) \cong H_j \\ 
\text{ for } 1 \leq j \leq l
\end{pmatrix} - \prod_{j=1}^{l} \left ( \underset{N_j \in \M_{t_n}(R_j)}{\Prob} (\cok(N_j) \cong H_j)  \right ) \right | \\
\leq & \, \sum_{i=1}^{d-1} (1 - c_{n-it_n, t_n}) + (1-c_{t_n, s_n}).
\end{split}
\end{equation*}
Here we take $r=t_n$ in the equation (\ref{eq22e}) and $(r,s)=(t_n, s_n)$ in the equation (\ref{eq22h}). 
Since both $t_n-s_n$ and $s_n$ go to infinity as $n \rightarrow \infty$, we have
$$
\lim_{n \rightarrow \infty} (\sum_{i=1}^{d-1} (1 - c_{n-it_n, t_n}) + (1-c_{t_n, s_n})) = 0
$$
so
\begin{equation*}
\begin{split}
\lim_{n \rightarrow \infty} \underset{A \in \M_{n}(\Z_p)}{\Prob} \begin{pmatrix}
\cok_{R_j}(A-tI) \cong H_j \\ 
\text{ for } 1 \leq j \leq l
\end{pmatrix}
& = \lim_{n \rightarrow \infty} \prod_{j=1}^{l} \left ( \underset{N_j \in \M_{t_n}(R_j)}{\Prob} (\cok(N_j) \cong H_j)  \right ) \\
& = \prod_{j=1}^{l} \left ( \frac{1}{\left | \Aut_{R_j}(H_j) \right |} \prod_{i=1}^{\infty}(1-p^{-i \deg (P_j)}) \right )
\end{split}
\end{equation*}
by \cite[Proposition 2.1]{CH21}. This finishes the proof of Theorem \ref{mainthm1}. $\square$

\section{Linearization} \label{Sec3}

\subsection{Linearization of a random matrix model} \label{Sub31}

We assume that $p$ is odd in this section.
The original motivation of the formula (\ref{eq1b}) by Friedman and Washington was the function field analogue of the Cohen-Lenstra conjecture. 
They suggested that the distribution of $\cok(I-F)$ for a random matrix $F \in \GSp_{2g}^{(q)}(\Z_p)$ converges to the distribution of the $p$-parts of the class groups of imaginary (i.e. ramified at $\infty$) quadratic extensions of $\F_q(t)$ as $g \rightarrow \infty$. Here $\GSp_{2g}^{(q)}(\Z_p)$ denotes the subset of $\GSp_{2g}(\Z_p)$ consisting of symplectic similitudes of similitude factor $q$. Instead of calculating the distribution of $\cok(I-F)$ for $F \in \GSp_{2g}^{(q)}(\Z_p)$, they calculated the distribution of $\cok(I-F)$ for $F \in \GL_{2g}(\Z_p)$. 

Achter \cite{Ach06} showed that the distribution of $\cok(I-F)$ for $F \in \GSp_{2g}^{(q)}(\Z_p)$ does not agree with the distribution of $\cok(I-F)$ for $F \in \M_{2g}(\Z_p)$, which disproves the conjecture of Friedman and Washington. It turns out that the distribution of $\cok(I-F)$ for $F \in \GSp_{2g}^{(q)}(\Z_p)$ depending on the exponent of $p$ in $q-1$. Garton \cite[Corollary 5.2.2]{Gar15} calculated the $g \rightarrow \infty$ limit of this distribution in the cases $p^v \parallel  q-1$ for $v \in \left\{ 1, 2 \right\}$. Lipnowski, Sawin and Tsimerman \cite[Theorem 8.11]{LST20} extended this result to every positive integer $v$. (Note that in the statement of \cite[Theorem 8.11]{LST20}, $\mu_g$ should be changed to $\mu$ and $n \rightarrow \infty$ should be changed to $g \rightarrow \infty$. The universal measure $\mu$ appears in \cite[Theorem 8.7]{LST20} depending on $n$.) The linearization of a random matrix model plays a crucial role in their work.

We introduce the linearization of a random matrix model following the exposition in \cite{LST20}. Assume that $p^v \parallel q-1$ for some $v>0$. For $F \in \GSp_{2g}^{(q)}(\Z_p)$, consider its logarithm 
$$
\log (F) := \sum_{k=1}^{\infty} \frac{(-1)^{k-1}}{k}(F-I)^k = (F-I) \sum_{k=1}^{\infty} \frac{(-1)^{k-1}}{k}(F-I)^{k-1}.
$$
If $F \equiv I \;\; (\text{mod } p)$, then the above series converges and $\cok(\log (F)) \cong \cok (F-I)$. By the definition of $\GSp_{2g}^{(q)}(\Z_p)$, we have $\displaystyle \frac{F}{q^{\frac{1}{2}}} \in \Sp_{2g}(\Z_p)$ so $\displaystyle \log (F) = M + \frac{1}{2} \log (q) I$ for some $M \in \mathfrak{sp}_{2g}(\Z_p)$. Since $p^v \parallel q-1$ implies that $p^v \parallel \log (q)$, the cokernels $\displaystyle \cok (M + \frac{1}{2} \log (q)I)$ and $\cok(M+p^v I)$ have the same distribution.

Now it is natural to define the linearization of $\cok(F-I)$ ($F \in \GSp_{2g}^{(q)}(\Z_p)$) by $\cok(M+p^v I)$ ($M \in \mathfrak{sp}_{2g}(\Z_p)$). 
By an indirect argument using the moments, Lipnowski, Sawin and Tsimerman proved that $\cok(F-I)$ for a random $F \in \GSp_{2g}^{(q)}(\Z_p)$ and $\cok(M+p^vI)$ for a random $M \in \mathfrak{sp}_{2g}(\Z_p)$ have the same distribution.
If we replace $\GSp_{2g}^{(q)}(\Z_p)$ with $\GL_n(\Z_p)$, the linearization of $\cok(A-I)$ ($A \in \GL_n(\Z_p)$) is given by $\cok(M)$ ($M \in \M_n(\Z_p)$). These two cokernels have the same distribution by the formula (\ref{eq1b}).

One may think that the cokernels $\cok(\log(F))$ and $\cok(F-I)$ have the same distribution because $F-I$ is a good enough approximation of $\log(F)$. In this perspective, the cokernels $\cok (L_1(A)), \, \cok(L_2(A)), \cdots$ should have the same distribution for
$$
L_m(x) = (x-1)E_m(x) := (x-1) \sum_{k=1}^{m} \frac{(-1)^{k-1}}{k}(x-1)^{k-1}.
$$
It seems reasonable because $F \equiv I \;\; (\text{mod } p)$ implies that the cokernels $\cok (L_1(A)), \, \cok(L_2(A)), \cdots$ are the same. However, the following example tells us that this is not true. 

\begin{example} \label{ex31a}
Let $p>3$ be a prime and $A \in \GL_n(\Z_p)$ be a random matrix. 
Since the polynomials $t, t-1, t-3 \in \F_p[t]$ are distinct and irreducible, we have
\begin{equation*}
\begin{split}
& \lim_{n \rightarrow \infty} \underset{A \in \GL_{n}(\Z_p)}{\Prob}
\left ( \cok  ( L_2(A) ) = 0 \right ) \\
= &  \lim_{n \rightarrow \infty} \underset{A \in \GL_{n}(\Z_p)}{\Prob}
(\cok(A-I_n) = \cok(A-3I_n) = 0) \\
= & \prod_{i=1}^{\infty} (1-p^{-i}) \cdot \lim_{n \rightarrow \infty} \underset{A \in \GL_{n}(\Z_p)}{\Prob}
\left ( \cok  ( L_1(A) ) = 0 \right )
\end{split}
\end{equation*}
by Theorem \ref{mainthm1}. (Here we have used the fact that $\cok (A) =0$ if and only if $A \in \GL_n(\Z_p)$.) Therefore the distributions of $\cok(L_2(A))$ and $\cok(L_1(A))$ do not coincide.
\end{example}

Let $\ell$ be a prime. By Eisenstein's criterion, $\ell ! \, E_{\ell}(x+1) \in \Z[x]$ is irreducible over $\Q$ so $E_{\ell}(x)$ is also irreducible over $\Q$. 
Assume that there exists a prime $p = p_{\ell} > \max (\ell, 3)$ such that $\overline{E_{\ell}}(x) \in \F_p[x]$ (reduction modulo $p$ of $E_{\ell}(x) \in \Z_p[x]$) is irreducible. 
The local-global principle does not hold in general for degree $\geq 3$ so the irreducibility of $E_{\ell}(x)$ does not guarantee the existence of such $p$. 
Let $H = H_{\ell}$ be any finite abelian $p$-group whose order is smaller than $p^{\ell-1}$.

For $A \in \GL_n(\Z_p)$, there is a natural surjection 
$$
\cok(L_{\ell}(A)) 
= \cok(E_{\ell}(A)(A-I)) 
\twoheadrightarrow \cok(E_{\ell}(A)).
$$
Since $\overline{E_{\ell}}(x) \in \F_p[x]$ is irreducible, $\cok(E_{\ell}(A))$ has an $R_{\ell} := \Z_p[t]/(E_{\ell}(t))$-module structure. 
The ring $R_{\ell}$ is a complete discrete valuation ring with a finite residue field $\F_p[t]/(\overline{E_{\ell}}(t)) \cong \F_{p^{\ell-1}}$. 
Therefore if $\cok(E_{\ell}(A))$ is nonzero, then it should have at least $p^{\ell-1}$ elements. By the condition $\left | H \right | < p^{\ell-1}$, we have
\begin{equation*}
 \cok(L_{\ell}(A)) \cong H \; \Leftrightarrow \; 
 \cok(E_{\ell}(A))=0  \text{ and } 
 \cok(A-I) \cong \cok(E_{\ell}(A)(A-I)) \cong H.
\end{equation*}
The polynomials $t, t-1, \ell (-1)^{\ell - 1}\overline{E_{\ell}}(t) \in \F_{p_{\ell}}[t]$ are monic, distinct and irreducible so
\begin{equation*}
\begin{split}
& \lim_{n \rightarrow \infty} \underset{A \in \GL_{n}(\Z_{p_{\ell}})}{\Prob}
(\cok(L_{\ell}(A)) \cong H_{\ell}) \\
= & \lim_{n \rightarrow \infty} \underset{A \in \GL_{n}(\Z_{p_{\ell}})}{\Prob}
\begin{pmatrix}
\cok(A-I) \cong H_{\ell} \text{ and } \\ 
\cok(E_{\ell}(A))=0
\end{pmatrix} \\
= & \lim_{n \rightarrow \infty} \underset{A \in \GL_{n}(\Z_{p_{\ell}})}{\Prob}
(\cok(A-I) \cong H_{\ell}) \times \prod_{i=1}^{\infty} (1-p_{\ell}^{-i (\ell -1)})
\end{split}    
\end{equation*}
by Theorem \ref{mainthm1}. 
Taking the limit $\ell \rightarrow \infty$, we obtain
\begin{equation} \label{eq31a}
\lim_{\ell \rightarrow \infty} \lim_{n \rightarrow \infty} \underset{A \in \GL_{n}(\Z_{p_{\ell}})}{\Prob} (\cok(L_{\ell}(A)) \cong H_{\ell})    
= \lim_{\ell \rightarrow \infty} \lim_{n \rightarrow \infty} \underset{A \in \GL_{n}(\Z_{p_{\ell}})}{\Prob}
(\cok(A-I) \cong H_{\ell}).
\end{equation}

This result supports the relation between $\cok(A-I)$ and its linearization, i.e. $\cok(\log A)$ for a random matrix $A$. Although our argument does not give a precise relation between a non-linear random matrix model and its linearization, we hope that it provides some insights on the linearization. 

\subsection{A conjectural generalization of Theorem \ref{mainthm1}} \label{Sub32}

The distinction between the reductions modulo $p$ of the polynomials $P_1(t), \cdots, P_l(t)$ in Theorem \ref{mainthm1} is essential. 
If two monic polynomials $P_1(t), P_2(t) \in \Z_p[t]$ have the same reduction modulo $p$, then $\cok(P_1(A))$ and $\cok(P_2(A))$ have the same $p$-rank so they can never be independent. Even in the simplest case $P_1(t)=t$ and $P_2(t)=t-p^v$ ($v>0$), it is not clear for us how to compute the joint distribution of $\cok(P_1(A))$ and $\cok(P_2(A))$. 

\begin{example} \label{ex32a}
Consider the joint distribution of $\cok(A-I)$ and $\cok(A-(p^v+1)I)$ for a random matrix $A \in \GL_n(\Z_p)$ and a positive integer $v$. If $A \equiv I \;\; (\text{mod } p)$, then
$$
\cok(A-I) \cong \cok(\log A)
$$
and
\begin{equation*}
\begin{split}
\cok(A-(p^v+1)I) 
& \cong \cok(\frac{A}{p^v+1}-I) \\
& \cong \cok(\log A - \log((p^v+1)I)) \\
& = \cok(\log A - \log(p^v+1)I).
\end{split}    
\end{equation*}
Since $p^v \parallel \log(p^v+1)$, the joint distribution of $\cok(M)$ and $\cok(M - \log(p^v+1)I)$ for a random $M \in \M_n(\Z_p)$ is same as the joint distribution of $\cok(M)$ and $\cok(M-p^vI)$. Following the idea of linearization, we expect that for every finite abelian $p$-groups $H_1$ and $H_2$ with $r_p(H_1)=r_p(H_2)$, the distribution of $\cok(A)=0$ and the joint distribution of $\cok(A-I) \cong H_1$ and $\cok(A-(p^v+1)I) \cong H_2$ become independent as $n \rightarrow \infty$, i.e.
$$
\lim_{n \rightarrow \infty} \underset{A \in \GL_{n}(\Z_p)}{\Prob} \begin{pmatrix}
\cok(A-I) \cong H_1 \text{ and } \\ 
\cok(A-(p^v+1)I) \cong H_2
\end{pmatrix} 
= \lim_{n \rightarrow \infty} \underset{M \in \M_{n}(\Z_p)}{\Prob} \begin{pmatrix}
\cok(M) \cong H_1 \text{ and } \\ 
\cok(M-p^vI) \cong H_2
\end{pmatrix}. 
$$
\end{example}

Let $Q_1(t), \cdots, Q_l(t) \in \F_p[t]$ be distinct monic irreducible polynomials and $P_{ij}(t) \in \Z_p[t]$ ($1 \leq j \leq k_i$) be distinct monic polynomials whose reductions modulo $p$ are $Q_i(t)$ for each $1 \leq i \leq l$. For each $i$, let $J_i$ be the joint distribution of the cokernels $\cok(P_{ij}(A))$ ($1 \leq j \leq k_i$). Motivated by the above example, we claim that the distributions $J_1, \cdots, J_l$ become independent as $n \rightarrow \infty$. When $k_i=1$ for all $i$, the following conjecture is a consequence of Theorem \ref{mainthm1}.

\begin{conjecture} \label{conj32b}
Let $H_{ij}$ be a finite module over $R_{ij} := \Z_p[t]/(P_{ij}(t))$ for each $i$ and $j$ such that $r_p(H_{ij}) = r_p(H_{ij'})$ for every $1 \leq i \leq l$ and $1 \leq j, j' \leq k_i$. Then we have
\begin{equation} \label{eq32b}
\lim_{n \rightarrow \infty} \left| \underset{A \in \M_{n}(\Z_p)}{\Prob} \begin{pmatrix}
\cok(P_{ij}(A)) \cong H_{ij} \text{ for} \\ 
1 \leq i \leq l, \, 1 \leq j \leq k_i
\end{pmatrix} 
- \prod_{i=1}^{l} \left ( \underset{A_i \in \M_{n}(\Z_p)}{\Prob} \begin{pmatrix}
\cok(P_{ij}(A_i)) \cong H_{ij} \\ 
\text{ for } 1 \leq j \leq k_i
\end{pmatrix} \right ) \right| = 0.
\end{equation}
In particular, if we assume that the limit
$$
\lim_{n \rightarrow \infty} \underset{A_i \in \M_{n}(\Z_p)}{\Prob} \begin{pmatrix}
\cok(P_{ij}(A_i)) \cong H_{ij} \\ 
\text{ for } 1 \leq j \leq k_i
\end{pmatrix}
$$
converges for each $1 \leq i \leq l$, then we have
\begin{equation} \label{eq32c}
\lim_{n \rightarrow \infty}  \underset{A \in \M_{n}(\Z_p)}{\Prob} \begin{pmatrix}
\cok(P_{ij}(A)) \cong H_{ij} \text{ for} \\ 
1 \leq i \leq l, \, 1 \leq j \leq k_i
\end{pmatrix} 
= \prod_{i=1}^{l} \left ( \lim_{n \rightarrow \infty} \underset{A_i \in \M_{n}(\Z_p)}{\Prob} \begin{pmatrix}
\cok(P_{ij}(A_i)) \cong H_{ij} \\ 
\text{ for } 1 \leq j \leq k_i
\end{pmatrix} \right ).
\end{equation}
\end{conjecture}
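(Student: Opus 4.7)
The plan is to rerun the two-step size-reduction argument from the proof of Theorem \ref{mainthm1} and perform the Chinese Remainder decomposition only at the coarser level where it is genuinely available, namely across the distinct irreducible factors $Q_i$ rather than across all polynomials $P_{ij}$. Set $d = \sum_{i,j} \deg P_{ij}$ and $R_0 := \Z_p[t]/\prod_{i,j} P_{ij}(t)$. The chain of elementary operations in equations (\ref{eq22c})--(\ref{eq22f}) never uses the distinctness of the reductions $\overline{P_j}$; that hypothesis enters the proof of Theorem \ref{mainthm1} only at equation (\ref{eq22g}), through Lemma \ref{lem21c}. Running the same chain with the full collection $\{P_{ij}\}_{(i,j)}$ still produces, after $d-1$ reductions, a matrix $N_{[C_0, \ldots, C_{d-1}], E}$ whose image in $\M_r(R_0)$ is Haar-random: this only uses that $C_0, \ldots, C_{d-1}$ are jointly uniform and that $R_0$ has $\Z_p$-rank $d$ with basis $1, t, \ldots, t^{d-1}$.

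For the CRT step, I would use instead the coarser decomposition
\[
R_0 \;\cong\; \prod_{i=1}^{l} S_i, \qquad S_i := \Z_p[t]\Big/\textstyle\prod_{j=1}^{k_i} P_{ij}(t),
\]
which is valid because for $i \neq i'$ the resultant of $\prod_j P_{ij}(t)$ and $\prod_{j'} P_{i'j'}(t)$ is a unit in $\Z_p$ (their reductions are $Q_i^{k_i}$ and $Q_{i'}^{k_{i'}}$, which are coprime in $\F_p[t]$), so the proof of Lemma \ref{lem21c} applies verbatim at this level. Hence a Haar-random $N \in \M_r(R_0)$ corresponds to a tuple of independent Haar-random $N_i \in \M_r(S_i)$. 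Combining with the bounds in (\ref{eq22e}) and the analogue of (\ref{eq22h}), and choosing $t_n$ as in the proof of Theorem \ref{mainthm1}, yields
\[
\lim_{n \to \infty} \Bigl| \underset{A \in \M_n(\Z_p)}{\Prob}(\text{full joint event}) - \prod_{i=1}^{l} \underset{N_i \in \M_{t_n}(S_i)}{\Prob}\bigl(\cok_{R_{ij}}(N_i \bmod P_{ij}) \cong H_{ij} \text{ for } 1 \leq j \leq k_i\bigr) \Bigr| = 0.
\]

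Finally, running the same reduction argument separately on $\{P_{ij}\}_j$ for each fixed $i$ (no CRT split is needed, as only one $Q_i$ is involved) identifies the $n \to \infty$ limit of the marginal probability $\underset{A \in \M_n(\Z_p)}{\Prob}(\cok(P_{ij}(A)) \cong H_{ij} \text{ for } 1 \leq j \leq k_i)$ with the same $\underset{N_i \in \M_{t_n}(S_i)}{\Prob}$ factor appearing above. Substituting and applying the triangle inequality (with probabilities bounded by $1$) gives (\ref{eq32b}); the product form (\ref{eq32c}) follows immediately whenever the marginal limits converge. The main obstacle is the bookkeeping in the first step---carefully verifying that the coefficient matrices $C_0, \ldots, C_{d-1}, E$ remain jointly independent and uniform after each elementary reduction and that their image in $\M_r(R_0)$ is Haar-random, all without appealing to the distinctness of the reductions modulo $p$.
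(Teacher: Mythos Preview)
This statement is labelled a \emph{conjecture} in the paper, not a theorem: Section~\ref{Sub32} motivates it heuristically via the linearization idea of Example~\ref{ex32a} and notes that the case $k_i=1$ for all $i$ reduces to Theorem~\ref{mainthm1}, but no proof is offered for the general case. There is therefore no proof in the paper to compare your attempt against.

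Your coarser CRT split $R_0 \cong \prod_i S_i$ is the right idea, and the first half of your argument does yield $\bigl|\,P_n - \prod_i F_{t_n}^{(i)}\,\bigr| \to 0$, where $P_n$ is the joint probability on the left of (\ref{eq32b}), $F_r^{(i)}$ is the probability over Haar-random $N_i\in\M_r(S_i)$ that $\cok_{R_{ij}}(N_i\bmod P_{ij})\cong H_{ij}$ for all $j$, and $t_n$ is chosen for the \emph{total} degree $d$. The gap is in your last paragraph. Running the reduction on the $i$-th marginal uses degree $d_i = \sum_j \deg P_{ij} < d$, and the final step (the analogue of (\ref{eq22f})--(\ref{eq22g})) requires $0<s=n-d_i r<r$. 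With $r=t_n \approx n/(d+\tfrac12)$ one has $(d_i+1)t_n-n \to -\infty$ whenever $d_i<d$ (i.e.\ whenever $l\ge 2$), so the marginal reduction cannot terminate at size $t_n$; it terminates instead at some $t_n^{(i)}$ with $n/(d_i+1) < t_n^{(i)} < n/d_i$, giving only $\bigl|\,Q_n^{(i)}-F_{t_n^{(i)}}^{(i)}\,\bigr|\to 0$ (here $Q_n^{(i)}$ denotes the $i$-th factor on the right of (\ref{eq32b})). Bridging the two sizes requires $\bigl|F_{t_n}^{(i)}-F_{t_n^{(i)}}^{(i)}\bigr|\to 0$, i.e.\ that $F_r^{(i)}$ is Cauchy as $r\to\infty$ --- equivalently that each marginal $Q_n^{(i)}$ converges, which is precisely what the second half of the conjecture \emph{assumes} rather than proves. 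Under that extra convergence hypothesis your argument does establish (\ref{eq32c}) (link $F_{t_n}^{(i)}$ to $Q_{m}^{(i)}$ for any $m$ with $d_i t_n<m<(d_i+1)t_n$ and both gaps $\to\infty$, then use $Q_m^{(i)}\to L_i$), but the unconditional statement (\ref{eq32b}) does not follow from what you wrote.
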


\section{Independence phenomenon} \label{Sec4}

A special case of Theorem \ref{mainthm1} says that the distributions of $\cok (A)$ and $\cok (A+I_n)$ for a random $A \in \M_n(\Z_p)$ become independent as $n \rightarrow \infty$. It is natural to ask whether such independence phenomenon holds for the distributions of $\cok (A)$ and $\cok (A+B_n)$, where $\left \{ B_n \right \}_{n \geq 1}$ is a given sequence of matrices such that $B_n \in \M_n(\Z_p)$. 
In this section, we prove the following theorem that gives an answer to the question.

\begin{theorem} \label{mainthm2}
Let $\left \{ B_n \right \}_{n \geq 1}$ be any sequence of matrices such that $B_n \in \M_n(\Z_p)$ and
\begin{equation} \label{eq4b}
\lim_{n \rightarrow \infty} (n - \log_p n - r_p(\cok(B_n)) ) = \infty.
\end{equation}
Then for every finite abelian $p$-groups $H_1$ and $H_2$, we have
\begin{equation} \label{eq4a}
\lim_{n \rightarrow \infty} \underset{A \in \M_{n}(\Z_p)}{\Prob}
\begin{pmatrix}
\cok(A) \cong H_1 \text{ and } \\ 
\cok(A+B_n) \cong H_2
\end{pmatrix} = \prod_{j=1}^{2} \left ( \frac{1}{\left | \Aut_{\Z_p}(H_j) \right |} \prod_{i=1}^{\infty}(1-p^{-i}) \right ).
\end{equation}
\end{theorem}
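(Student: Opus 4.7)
The plan is to use the method of joint surjection moments. First, I would normalize $B_n$ via Smith normal form: writing $B_n = U_n D_n V_n$ with $U_n, V_n \in \GL_n(\Z_p)$ and $D_n$ diagonal, the substitution $A \mapsto U_n^{-1} A V_n^{-1}$ preserves the Haar measure on $\M_n(\Z_p)$ and sends the joint data $(\cok A, \cok(A+B_n))$ to $(\cok(U_n^{-1} A V_n^{-1}), \cok(U_n^{-1} A V_n^{-1} + D_n))$. Hence we may assume $B_n = \begin{pmatrix} I_s & 0 \\ 0 & p D_n' \end{pmatrix}$ with $s = n - r_p(\cok B_n)$, and the hypothesis (\ref{eq4b}) reads $s - \log_p n \to \infty$.

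Next, by the standard moment characterization of joint cokernel distributions, it suffices to show
$$E\bigl[|\operatorname{Sur}(\cok A, G_1)| \cdot |\operatorname{Sur}(\cok(A+B_n), G_2)|\bigr] \to 1$$
for every pair of finite abelian $p$-groups $G_1, G_2$. Expanding as a sum over $(\phi, \psi) \in \operatorname{Sur}(\Z_p^n, G_1) \times \operatorname{Sur}(\Z_p^n, G_2)$ and using the column-wise independence of $A$, each summand equals $|K|^{-n}$ when the consistency condition $\psi(\im B_n) \subseteq K_2 := \im(\phi,\psi) \cap (\{0\} \times G_2)$ holds (with $K = \im(\phi, \psi)$), and vanishes otherwise. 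Grouping by $K$ via Goursat's lemma, the term $K = G_1 \times G_2$ contributes $\to 1$, while for proper $K$ any valid $\psi$ must factor through $\cok(B_n)$ modulo $K_2$, so the number of such $\psi$ is at most $|K_2|^n \cdot |G_2/K_2|^{r_p(\cok B_n)}$; combined with the count of compatible $\phi$ and the weight $|K|^{-n}$, each proper $K$ contributes at most $|G_2/K_2|^{r_p(\cok B_n) - n}$, and summing over the finitely many $K$ gives total error $O(p^{r_p(\cok B_n) - n})$.

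An alternative and perhaps more natural approach, paralleling Section~\ref{Sec2}, is to partition $A = \begin{pmatrix} A_1 & A_2 \\ A_3 & A_4 \end{pmatrix}$ into principal blocks of sizes $s$ and $n-s$ and apply Lemma~\ref{lem21b} separately to the top-row blocks $(A_1 \;\; A_2)$ of $A$ and $(A_1 + I_s \;\; A_2)$ of $A + B_n$, each of which has full row rank over $\F_p$ with probability $c_{n,s}$. This yields $V, V' \in \GL_n(\Z_p)$ (depending on $A$) that reduce $A$ and $A + B_n$ respectively to block-triangular form, so that $\cok(A) \cong \cok(A_4')$ and $\cok(A+B_n) \cong \cok(A_4'')$ for certain matrices $A_4', A_4'' \in \M_{n-s}(\Z_p)$. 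If the joint distribution of $(A_4', A_4'')$ can be shown asymptotically uniform on $\M_{n-s}(\Z_p)^2$, then (\ref{eq4a}) follows from the Friedman--Washington formula (\ref{eq1a}) applied factorwise.

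The main obstacle in the moment approach is the quantitative moment-to-distribution step for joint cokernels (analogous to the single-cokernel results used throughout this literature), while in the structural approach the difficulty is controlling the correlation between $A_4'$ and $A_4''$: the reductions $V, V'$ depend on overlapping data $A_1, A_2$ through two different affine constraints, so the induced distributions on $A_4'$ and $A_4''$ are not manifestly independent. The $\log_p n$ correction in the hypothesis (\ref{eq4b}) is precisely what closes the gap: a union bound over $O(n)$ rank-stability events, each with failure probability at most $p^{-s}$, gives total failure probability bounded by $n \cdot p^{-s} = p^{\log_p n - s} \to 0$, which is what enables the final decoupling.
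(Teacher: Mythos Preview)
Your proposal sketches two strategies, neither of which is the paper's. The paper's proof is entirely elementary: it first establishes the result under the stronger hypothesis $n - 2r_n \to \infty$ (Proposition~\ref{prop42}) by performing \emph{the same} sequence of elementary row and column operations simultaneously on $A$ and $A + B_n$, showing that the resulting probability $\widetilde P_{n,r}(X)$ is actually independent of $X \in \M_r(\Z_p)$; it then bootstraps to the stated hypothesis via a one-step recursion (Proposition~\ref{prop43}) passing from $(n,r)$ to $(n-1,r-1)$ at cost $d_{n,r} = p^{-(n-r)} + p^{-r}$. Iterating roughly $y_n \approx (3r_n - n)/2$ times accumulates total error $\lesssim n \cdot p^{-(n-r_n)}$, and this is exactly where the $\log_p n$ enters.

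Your moment approach is a genuine and in fact sharper alternative: the Goursat bookkeeping is correct, and bounding the number of admissible $\psi$ via $|\Hom(\cok B_n, G_2/K_2)| \leq |G_2/K_2|^{r_n}$ gives mixed $(G_1,G_2)$-surjection moments equal to $1 + O_{G_1,G_2}(p^{r_n - n})$. This would prove the theorem under the optimal hypothesis $n - r_n \to \infty$ that the paper only conjectures (see the paragraph preceding Proposition~\ref{prop45}). The gap you flag --- converting joint surjection moments to a joint distribution --- is real but closable, either through the categorical moment framework of Sawin--Wood or by adapting Wood's single-variable robustness argument to the product category of finite abelian $p$-groups; the limiting moments are all $1$ and so trivially satisfy any growth condition. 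None of this is in the present paper, however, so as written your argument is incomplete.

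Your structural approach has a more serious conceptual issue. Reducing $A$ and $A + B_n$ by \emph{different} matrices $V, V'$ decouples the two cokernels only formally: the correlation between $A_4'$ and $A_4''$ that you worry about is genuine, and nothing in your sketch shows it vanishes. The paper's Proposition~\ref{prop42} succeeds precisely by avoiding this --- it applies one common sequence of operations to both matrices, after which the transformed difference can be absorbed into fresh randomness. Your union-bound heuristic for the $\log_p n$ is not how the paper arrives at it (there is no union over $O(n)$ rank events; rather it is an accumulated error over $O(n)$ iterations of Proposition~\ref{prop43}), though the final arithmetic $n \cdot p^{-(n-r_n)} \to 0$ coincides.
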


For given $H_1$ and $H_2$, the following notations will be used throughout this section. 
\begin{itemize}
    \item For $j \in \left\{ 1, 2 \right\}$, $\displaystyle c(H_j) :=  \frac{1}{\left | \Aut_{\Z_p}(H_j) \right |} \prod_{i=1}^{\infty}(1-p^{-i})$.
    
    \item For a given sequence $\left \{ B_n \right \}_{n \geq 1}$, denote
    $r_n  := r_p(\cok(B_n))$.
    
    \item For $n>2r>0$, let $\widetilde{\M}_{n, r}(\Z_p) \subset \M_n(\Z_p)$ be as in Section \ref{Sub22}. For $n>r>0$, denote
$$
\M^{1}_{n, r}(\Z_p) := \left \{ \begin{pmatrix}
A_1 & A_2 \\ 
A_3 & A_4
\end{pmatrix} \in \M_{r+(n-r)}(\Z_p) : (A_3)_{*1} = \begin{pmatrix}
1 & 0 & \cdots & 0
\end{pmatrix}^T \right \} \subset \M_n(\Z_p).
$$   

    \item For $n > r \geq 0$ and $X \in \M_{r}(\Z_p)$, denote
\begin{equation*}
\begin{split}
P_{n, r}(X) & := \underset{A \in \M_{n}(\Z_p)}{\Prob} (\cok (A) \cong H_1 \text{ and } \cok \left (A + \begin{pmatrix}
X & O \\ 
O & I_{n-r}
\end{pmatrix} \right ) \cong H_2), \\
P_{n, r}^{+} & := \underset{X \in \M_r(\Z_p)}{\sup} P_{n, r}(X), \\
P_{n, r}^{-} & := \underset{X \in \M_r(\Z_p)}{\inf} P_{n, r}(X).
\end{split}
\end{equation*}
    
    \item $\widetilde{P}_{n, r}(X)$, $P^{1}_{n, r}(X)$, $\widetilde{P}^{\pm}_{n,r}$ and $P^{1, \pm}_{n,r}$ are defined by the same way.
\end{itemize}

First we prove the following proposition, which is a weaker version of Theorem \ref{mainthm2}.

\begin{proposition} \label{prop42}
The equation (\ref{eq4a}) holds for every $H_1$ and $H_2$ if
\begin{equation} \label{eq4c}
\lim_{n \rightarrow \infty} (n - 2 r_n ) = \infty.
\end{equation}
\end{proposition}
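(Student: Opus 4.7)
The plan has two main components: (i) show that $P_{n, r}(X)$ is essentially independent of $X$ for $n - 2r$ large, and (ii) identify the common limit as $c(H_1) c(H_2)$ by induction on $r$. First, using $\cok(PAQ) \cong \cok(A)$ and $\cok(PAQ + PB_nQ) \cong \cok(A + B_n)$ for $P, Q \in \GL_n(\Z_p)$ together with the invariance of the Haar measure on $\M_n(\Z_p)$ under $A \mapsto PAQ$, I would replace $B_n$ with $PB_nQ$ chosen to put it in Smith normal form; permuting the unit diagonal entries to the bottom gives $B_n = \begin{pmatrix} X_n & O \\ O & I_{n - r_n} \end{pmatrix}$ for some $X_n \in \M_{r_n}(\Z_p)$. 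It then suffices to show $P_{n, r}^{\pm} \to c(H_1) c(H_2)$ uniformly under $n - 2r \to \infty$.

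For the first component, I would carry out a two-layer reduction mirroring the one in Section \ref{Sub22}. The first layer applies Lemma \ref{lem21b} to the $(n - r) \times r$ bottom-left block of $A$: with probability at least $c_{n-r, r}$ this block can be reduced to $\begin{pmatrix} I_r \\ O \end{pmatrix}$ by some $U \in \GL_{n-r}(\Z_p)$, placing $A$ uniformly in $\widetilde{\M}_{n, r}(\Z_p)$. The corresponding row operation converts the perturbation $\begin{pmatrix} X & O \\ O & I_{n-r} \end{pmatrix}$ into $\begin{pmatrix} X & O \\ O & U \end{pmatrix}$, but the joint probability is invariant under such replacements because right-multiplication by $\begin{pmatrix} I_r & O \\ O & V \end{pmatrix}$ preserves $\widetilde{\M}_{n,r}(\Z_p)$ and changes $U$ to $UV$ freely; this yields $|P_{n, r}(X) - \widetilde{P}_{n, r}(X)| \leq 1 - c_{n-r, r} = O(p^{-(n - 2r)})$. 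The second layer operates inside $\widetilde{\M}_{n, r}(\Z_p)$: using the middle $I_r$ to row-reduce both $A$ and $A + D$, absorbing cross terms via the Haar-preserving substitutions $A_2 \to A_2 + A_1 A_4$ and $A_3 \to A_3 + A_1 A_5$, and applying one final row operation to diagonalize the resulting perturbation, the joint cokernel of $(A, A + D)$ is identified with the joint cokernel of $\bigl(M,\, M + \begin{pmatrix} \widetilde{\Delta} & O \\ O & I_{n - 2r} \end{pmatrix}\bigr)$ where $M \in \M_{n-r}(\Z_p)$ is Haar and $\widetilde{\Delta} = -A_1 - X(A_4 + I_r)$ has Haar marginal distribution on $\M_r(\Z_p)$ that is independent of $X$. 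Consequently $\widetilde{P}_{n, r}(X) = Q_{n-r, r}$, where $Q_{m, r} := \mathbb{E}_{\Delta \sim \mathrm{Haar}(\M_r(\Z_p))}[P_{m, r}(\Delta)]$ does not depend on $X$, and combining the two layers gives $P_{n, r}(X) = Q_{n-r, r} + O(p^{-(n - 2r)})$ uniformly in $X$.

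For the second component, I would show $Q_{m, r} \to c(H_1) c(H_2)$ by induction on $r$. The base case $r = 0$ is $Q_{m, 0} = \Prob(\cok(A) \cong H_1, \cok(A + I_m) \cong H_2) \to c(H_1) c(H_2)$, which is Theorem \ref{mainthm1} applied to $P_1(t) = t$ and $P_2(t) = t + 1$ (whose reductions modulo $p$ are distinct irreducible polynomials for every $p$). For the inductive step, decompose $\Delta$ via its Smith normal form: if $r_p(\cok(\Delta)) = k$, then the $r - k$ unit diagonal entries absorb into the $I_{m-r}$ block of the perturbation, giving $P_{m, r}(\Delta) = P_{m, k}(\Delta')$ for the non-unit part $\Delta' \in \M_k(\Z_p)$. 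Writing $Q_{m, r} = \sum_{k = 0}^{r} \alpha_k^r \, \mathbb{E}[P_{m, k}(\Delta') \mid r_p(\cok(\Delta)) = k]$ with $\alpha_k^r = \Prob(r_p(\cok(\Delta)) = k)$ satisfying $\sum_k \alpha_k^r = 1$, the inductive hypothesis handles all terms with $k < r$; the residual $k = r$ term has coefficient $\alpha_r^r = p^{-r^2} < 1$ and is controlled by iterating the reduction of the previous paragraph to produce a fixed-point equation that forces $Q_{m, r} \to c(H_1) c(H_2)$. The main obstacle is making this induction uniform in $r$ so that it covers $r = r_n \to \infty$; this requires exploiting the tightness of the Cohen-Lenstra-type distribution $\{\alpha_k^r\}_k$ (which concentrates on small $k$ uniformly in $r$) to truncate the decomposition at a threshold independent of $r$.
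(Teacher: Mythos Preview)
Your first component is correct and is essentially what the paper does: reduce $B_n$ to its Smith normal form $\begin{pmatrix} X_n & O \\ O & I_{n-r_n} \end{pmatrix}$, conjugate by $\begin{pmatrix} I_r & O \\ O & U_1 \end{pmatrix}$ to land in $\widetilde{\M}_{n,r}(\Z_p)$ with error $1-c_{n-r,r}$, and then carry out the block elimination to see that $\widetilde{P}_{n,r}(X)$ is literally independent of $X$ (in the paper's notation, the new perturbation block is $-A_1'' = -(A_1 + X(A_4+I_r))$, which is Haar in $\M_r(\Z_p)$ regardless of $X$). The paper records this as
\[
\left| P_{n,r}(X) - P_{n,r}(I_r) \right| \leq 2(1-c_{n-r,r}).
\]

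Where you diverge is in the second component, and here you miss the shortcut that makes the paper's proof a few lines long. Since $P_{n,r}(X)$ is essentially independent of $X$, simply take $X = I_r$: then
\[
P_{n,r}(I_r) = \underset{A \in \M_n(\Z_p)}{\Prob}\bigl(\cok(A)\cong H_1,\ \cok(A+I_n)\cong H_2\bigr),
\]
whose $n\to\infty$ limit is $c(H_1)c(H_2)$ by Theorem~\ref{mainthm1} applied to $P_1(t)=t$ and $P_2(t)=t+1$. That finishes the proof immediately under the hypothesis $n-2r_n\to\infty$.

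Your proposed induction on $r$ via the Smith decomposition of a Haar $\Delta$ is therefore unnecessary, and as you yourself flag, it carries a genuine uniformity problem: the conditional expectations $\mathbb{E}[P_{m,k}(\Delta')\mid r_p(\cok(\Delta))=k]$ are not instances of $Q_{m,k}$ (the conditional law of $\Delta'$ is supported on matrices in $p\Z_p$, not Haar on $\M_k(\Z_p)$), the ``fixed-point equation'' for the residual $k=r$ term is not made precise, and the tightness argument needed to make the induction uniform in $r=r_n\to\infty$ is only gestured at. None of these obstacles is insurmountable, but the entire second component is avoidable once you notice that choosing $X=I_r$ collapses the problem to the already-proved case $B_n = I_n$.
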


\begin{proof}
Fix $H_1$ and $H_2$. For $n > 2r > 0$ and $X \in \M_{r}(\Z_p)$, we obtain
\begin{equation} \label{eq4d}
\left| P_{n, r}(X) - \widetilde{P}_{n, r}(X) \right| \leq 1 - c_{n-r, r}
\end{equation}
by applying Lemma \ref{lem21b} as before. Now consider the probability $\widetilde{P}_{n, r}(X)$. Let
$$
A = \begin{pmatrix}
A_1 & A_2 & A_3 \\ 
I_r & A_4 & A_5 \\ 
O & A_6 & A_7 
\end{pmatrix} \in \widetilde{\M}_{n, r}(\Z_p)
$$
and 
$$
B = \begin{pmatrix}
X & O \\
O & I_{n-r}
\end{pmatrix} \in \M_{r+(n-r)}(\Z_p).
$$
By elementary operations, we can simultaneously transform $A$ and $B$ as follows:
\begin{equation*}
\small
\begin{split}
& \, (A, B) \\
\Rightarrow & \left ( \begin{pmatrix}
O & A_2-A_1A_4 & A_3-A_1A_5 \\ 
I_r & A_4 & A_5 \\ 
O & A_6 & A_7
\end{pmatrix}, \, \begin{pmatrix}
X & -A_1 & O \\ 
O & I_r & O \\ 
O & O & I_{n-2r}
\end{pmatrix} \right ) \\
\Rightarrow & \left ( \begin{pmatrix}
O & A_2-A_1A_4 & A_3-A_1A_5 \\ 
I_r & O & O \\ 
O & A_6 & A_7
\end{pmatrix}, \, \begin{pmatrix}
X & -A_1-XA_4 & -XA_5 \\ 
O & I_r & O \\ 
O & O & I_{n-2r}
\end{pmatrix} \right ) \\
\Rightarrow & \left ( \begin{pmatrix}
O & A_2' = A_2-A_1A_4+XA_5A_6 & A_3'= A_3-A_1A_5+XA_5A_7 \\ 
I_r & O & O \\ 
O & A_6 & A_7
\end{pmatrix}, \, \begin{pmatrix}
X & -A_1' = -A_1-XA_4 & O \\ 
O & I_r & O \\ 
O & O & I_{n-2r}
\end{pmatrix} \right ) \\
=: & \, (A', B').
\end{split}
\end{equation*}
Now we have
$$
\cok(A) \cong \cok (A') \cong \cok \begin{pmatrix}
A_2' & A_3' \\
A_6 & A_7 \\
\end{pmatrix}
$$
and
\begin{equation*}
\begin{split}
\cok(A+B) & \cong \cok(A'+B') \\
& \cong \cok \begin{pmatrix}
X & A_2'-A_1' & A_3' \\ 
I_r & I_r & O \\ 
O & A_6 & A_7+I_{n-2r}
\end{pmatrix} \\
& \cong \cok \begin{pmatrix}
O & A_2'-A_1'-X & A_3' \\ 
I_r & I_r & O \\ 
O & A_6 & A_7+I_{n-2r}
\end{pmatrix} \\
& \cong \cok \left ( \begin{pmatrix}
A_2' & A_3' \\
A_6 & A_7 \\
\end{pmatrix} + \begin{pmatrix}
-A_1'' = -A_1'-X & O \\
O & I_{n-2r} \\
\end{pmatrix}  \right ).
\end{split}    
\end{equation*}
For given $A_i$ ($4 \leq i \leq 7$) and $X$, the matrices $A_1''$, $A_2'$, $A_3'$ are random and independent if $A_i$ ($1 \leq i \leq 3$) are random and independent. Therefore $\widetilde{P}_{n, r}(X)$ is independent of the choice of $X$ so the equation (\ref{eq4d}) implies that
\begin{equation} \label{eq4e}
\left| P_{n, r}(X) - P_{n, r}(I_r) \right| \leq 2(1 - c_{n-r, r})
\end{equation}
for every $X \in \M_r(\Z_p)$. 

Now consider a sequence $\left \{ B_n \right \}_{n \geq 1}$ which satisfies the condition (\ref{eq4c}). The Smith normal form of $B_n$ is of the form $\displaystyle \begin{pmatrix}
X_{n} & O\\ 
O & I_{n-r_n}
\end{pmatrix}$ for some $X_n \in \M_{r_n}(\Z_p)$. 
The equation (\ref{eq4e}) implies that
$$
\left | P_{n, r_n}(X_n) - P_{n, r_n}(I_{r_n}) \right | \leq 2(1-c_{n-r_n, r_n})
$$
when $n>2r_n$. The assumption $\displaystyle \lim_{n \rightarrow \infty} (n- 2r_n) = \infty$ implies that
$$
\lim_{n \rightarrow \infty} 2(1-c_{n-r_n, r_n})=0.
$$
Therefore 
$$
\lim_{n \rightarrow \infty} \underset{A \in \M_{n}(\Z_p)}{\Prob}
\begin{pmatrix}
\cok(A) \cong H_1 \text{ and } \\ 
\cok(A+B_n) \cong H_2
\end{pmatrix}
= \lim_{n \rightarrow \infty} P_{n, r_n}(X_n)
= \lim_{n \rightarrow \infty} P_{n, r_n}(I_{r_n})
= c(H_1)c(H_2),
$$
where the last equality follows from Theorem \ref{mainthm1}.
\end{proof}

The Smith normal form of each $B_n$ tells us that Theorem \ref{mainthm2} is true if the equation
\begin{equation*}
\lim_{n \rightarrow \infty} P_{n, r_n}^+
= \lim_{n \rightarrow \infty} P_{n, r_n}^-
= c(H_1)c(H_2)
\end{equation*}
holds whenever the condition (\ref{eq4b}) is satisfied. The next proposition provides a relation between $P_{n, r}^{\pm}$ and $P_{n-1, r-1}^{\pm}$.

\begin{proposition} \label{prop43}
For $n>r \geq 1$, we have
\begin{equation} \label{eq4f}
P_{n-1, r-1}^{-} - d_{n, r}
\leq P_{n,r}^{-} 
\leq P_{n, r}^{+} 
\leq P_{n-1, r-1}^{+} + d_{n, r}
\end{equation}
for $\displaystyle d_{n,r} := (1-c_{n-r, 1}) + (1 - c_{r,1} ) = \frac{1}{p^{n-r}} + \frac{1}{p^r}$.
\end{proposition}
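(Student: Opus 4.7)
The strategy is to reduce the $(n, r)$ problem to the $(n-1, r-1)$ problem via two successive conditioning steps, one contributing each of the terms $1 - c_{n-r,1}$ and $1 - c_{r,1}$ in $d_{n, r}$. First, I would pass from $\M_n(\Z_p)$ to $\M^1_{n, r}(\Z_p)$ by exploiting that the first column of $A_3$ is a Haar-random vector in $\Z_p^{n-r}$ and has a unit entry with probability $c_{n-r, 1}$ (Lemma~\ref{lem21b}); on that event I would choose $U_1 \in \GL_{n-r}(\Z_p)$ with $U_1 (A_3)_{*,1} = e_1$, and conjugation by $\diag(I_r, U_1)$ preserves $B_X$ (since $U_1 I_{n-r} U_1^{-1} = I_{n-r}$), which delivers $|P^{\pm}_{n, r} - P^{1, \pm}_{n, r}| \leq 1 - c_{n-r, 1}$.

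Next, for $A \in \M^1_{n, r}(\Z_p)$, the entry $A_{r+1, 1} = 1$ is a common pivot in both $A$ and $A + B_X$ (as $B_X$ vanishes at $(r+1, 1)$), so I would apply the simultaneous operations $V: C_j \to C_j - A_{r+1, j} C_1$ for $j \geq 2$ and $L: R_i \to R_i - A_{i, 1} R_{r+1}$ for $i \neq r+1$ to the pair $(A, B_X)$. A direct computation shows that $LAV$ has row $r+1$ equal to $e_1^T$ and column $1$ equal to $e_{r+1}$, so stripping them produces $\tilde A \in \M_{n-1}(\Z_p)$ with $\cok(\tilde A) \cong \cok(A)$ and $\tilde A$ Haar-random; the same pivot, via the standard outer-product formula $\cok(M) = \cok(D - w v^T)$ applied to $L(A+B_X)V$, yields $\cok(A+B_X) \cong \cok(\tilde A + \tilde B')$ with
\[
\tilde B' = \begin{pmatrix} Y_1 & u & Y_2 \\ O & O & I_{n-r-1} \end{pmatrix}
\]
in row partition $(r, n-r-1)$ and column partition $(r-1, 1, n-r-1)$, where $u_i = -(A_{r+1,r+1}+1)X_{i,1} - A_{i, 1}$.

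The key observation is that inside $\M^1_{n, r}(\Z_p)$ the top column segment $A_{1:r, 1}$ remains an unconstrained Haar-random vector, so $u \in \Z_p^r$ is itself Haar; by Lemma~\ref{lem21b} it has a unit entry with probability $c_{r, 1}$. On that event, after a harmless row permutation inside the first $r$ rows placing the unit at position $r$, I would use $u_r$ as a pivot in $\tilde B'$: row operations to clear column $r$, column operations to clear row $r$, a normalization of row $r$ by $u_r^{-1}$, and finally row operations using the $I_{n-r-1}$ block to clear the remaining top-right block. The outcome is $\tilde B' = B_{X'}$ for some $X' \in \M_{r-1}(\Z_p)$, while the same operations transform $\tilde A$ into $\tilde A''$. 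Since these operations are invertible and fully determined by $\tilde B'$, and $\tilde A$ is Haar-random conditionally on $\tilde B'$ (the entries of $A$ feeding $\tilde B'$ are disjoint from those giving $\tilde A$ its remaining uniformity), $\tilde A''$ is Haar conditionally on $X'$. On the good event (probability at least $1 - d_{n, r}$ by the union bound), the joint event thus has probability $\mathbb{E}[P_{n-1, r-1}(X')] \in [P^{-}_{n-1, r-1}, P^{+}_{n-1, r-1}]$, and combining with the failure probability $d_{n, r}$ delivers the two-sided estimate after taking the sup/inf over $X$.

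The main technical obstacle will be carrying out Step 2 carefully: the cokernel-preserving reduction of $L(A+B_X)V$ produces a rank-one correction $-\tilde c \, e_r^T$ with $\tilde c = (X_{*,1}^T, 0)^T$, coming from the overlap of the $X$-column of $B_X$ with the stripped first column; correctly tracking this correction through Step 3 is what forces the use of three distinct pivots in sequence ($u_r$, a normalization of row $r$, and the $I_{n-r-1}$ block) to recover the clean $B_{X'}$ form. A secondary subtle point is the conditional-Haar claim for $\tilde A''$: this rests on partitioning the entries of $A$ so that the ones driving $\tilde B'$ (namely $A_{1:r, 1}$, $A_{r+1, r+1}$, and $A_{r+1, r+2:n}$) are disjoint from those providing the independent freedom in the remaining entries of $\tilde A''$.
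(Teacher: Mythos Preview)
Your proposal is correct and follows essentially the same route as the paper: reduce to $\M^1_{n,r}(\Z_p)$ at cost $1-c_{n-r,1}$, use the pivot at $(r+1,1)$ to strip one row and column simultaneously from $A$ and $A+B_X$, observe that the resulting shift contains a Haar-random column $u$ (your $u$ is exactly the paper's $A_1''$, and indeed $u_i=-(A_{r+1,r+1}+1)X_{i,1}-A_{i,1}$ matches), and then at cost $1-c_{r,1}$ reduce that column to $e_r$ to land in the $P_{n-1,r-1}$ setup. The only cosmetic difference is that the paper names the intermediate quantities $P^2_{n-1,r}$ and $P^3_{n-1,r}$ and uses a single left-multiplication by $\diag(Y^{-1},I)$ with $Y\in\GL_r(\Z_p)$ to turn $u$ into $e_r$, whereas you pivot directly on a unit entry of $u$ and clear the extra block $Y_2$ afterward; both orderings work for the same reason, namely that $\tilde A$ is Haar conditionally on the data $(A_{1:r,1},A_{r+1,*},X)$ determining $\tilde B'$.
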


\begin{proof}
Consider $A = \begin{pmatrix}
A_1 & A_2\\ 
A_3 & A_4
\end{pmatrix} \in \M_{r+(n-r)}(\Z_p)$. Applying Lemma \ref{lem21b} to the first column of $A_3$, we have
\begin{equation} \label{eq4g}
P_{n, r}^{+} \leq P_{n, r}^{1, +} + (1 - c_{n-r, 1}).
\end{equation}
Let 
$$
A = \begin{pmatrix}
A_1 & A_2 & A_3 & A_4\\ 
1 & A_5 & A_6 & A_7\\ 
O & A_8 & A_9 & A_{10}
\end{pmatrix} \in \M^{1}_{n, r}(\Z_p) \subset \M_{(r+1+(n-r-1)) \times (1+(r-1)+1+(n-r-1))}(\Z_p)
$$ 
and 
$$
X = \begin{pmatrix}
X_1 & X_2
\end{pmatrix} \in \M_{r \times (1+(r-1))}(\Z_p).
$$ 
By elementary operations, we can simultaneously transform $A$ and $\displaystyle \begin{pmatrix}
X & O \\ 
O & I_{n-r}
\end{pmatrix}$ as follows:
\begin{equation*}
\small
\begin{split}
& \, (A, \begin{pmatrix}
X & O \\ 
O & I_{n-r}
\end{pmatrix}) \\
\Rightarrow & \, \left ( \begin{pmatrix}
O & A_2' & A_3' & A_4'\\ 
1 & A_5 & A_6 & A_7\\ 
O & A_8 & A_9 & A_{10}
\end{pmatrix}, 
\begin{pmatrix}
X_1 & X_2 & -A_1 & O\\ 
O & O & 1 & O\\ 
O & O & O & I_{n-r-1}
\end{pmatrix}
 \right ) \\
\Rightarrow & \, \left ( \begin{pmatrix}
O & A_2' & A_3' & A_4'\\ 
1 & O & O & O\\ 
O & A_8 & A_9 & A_{10}
\end{pmatrix}, 
\begin{pmatrix}
X_1 & X_2-X_1A_5 & -A_1' & -X_1A_7\\ 
O & O & 1 & O\\ 
O & O & O & I_{n-r-1}
\end{pmatrix}
 \right ) \\
\Rightarrow & \, \left ( \begin{pmatrix}
O & A_2'' & A_3'' & A_4''\\ 
1 & O & O & O\\ 
O & A_8 & A_9 & A_{10}
\end{pmatrix}, 
\begin{pmatrix}
X_1 & X_2-X_1A_5 & -A_1' & O\\ 
O & O & 1 & O\\ 
O & O & O & I_{n-r-1}
\end{pmatrix}
 \right ).
\end{split}
\end{equation*}
Now we have
$$
\cok (A) \cong \cok \left ( A' := \begin{pmatrix}
A_2'' & A_3'' & A_4'' \\ 
A_8 & A_9 & A_{10}
\end{pmatrix} \right ) 
$$
and

\begin{equation*}
\small
\begin{split}
\cok \left ( A + \begin{pmatrix}
X & O \\ 
O & I_{n-r}
\end{pmatrix} \right )  
& \cong \cok \begin{pmatrix}
X_1 & A_2''+X_2-X_1A_5 & A_3''-A_1' & A_4''\\ 
1 & O & 1 & O\\ 
O & A_8 & A_9 & A_{10}+I_{n-r-1}
\end{pmatrix} \\
& \cong \cok \begin{pmatrix}
O & A_2''+X_2-X_1A_5 & A_3''+A_1'' & A_4''\\ 
1 & O & 1 & O\\ 
O & A_8 & A_9 & A_{10}+I_{n-r-1}
\end{pmatrix} \\
& \cong \cok \begin{pmatrix}
A_2''+X_2-X_1A_5 & A_3''+A_1'' & A_4''\\ 
A_8 & A_9 & A_{10}+I_{n-r-1}
\end{pmatrix} \\
& = \cok \left ( A' + \begin{pmatrix}
X_2-X_1A_5 & A_1'' & O\\ 
O & O & I_{n-r-1}
\end{pmatrix} \right ).
\end{split}    
\end{equation*}
For given $X$ and $A_i$ ($5 \leq i \leq 10$), $A_j''$ ($1 \leq j \leq 4$) are random and independent if $A_j$ ($1 \leq j \leq 4$) are random and independent. Therefore $A'$, $A_1''$ and $A_5$ are random and independent if $A$ is random.


Now define
\begin{equation*}
\begin{split}
P_{n-1, r}^{2}(X) & := \underset{\substack{A' \in \M_{n-1}(\Z_p) \\ A_1'' \in \M_{r \times 1}(\Z_p)}}{\Prob} (\cok (A') \cong H_1 \text{ and } \cok \left (A' + \begin{pmatrix}
X & A_1'' & O \\ 
O & O & I_{n-r-1}
\end{pmatrix} \right ) \cong H_2), \\
P_{n-1, r}^{3}(X) & := \underset{A' \in \M_{n-1}(\Z_p)}{\Prob} (\cok (A') \cong H_1 \text{ and } \cok \left (A' + \begin{pmatrix}
X & e_r & O \\ 
O & O & I_{n-r-1}
\end{pmatrix} \right ) \cong H_2)
\end{split}    
\end{equation*}
for $X \in \M_{r \times (r-1)}(\Z_p)$, $e_r := \begin{pmatrix}
0 & \cdots & 0 & 1
\end{pmatrix}^T \in \M_{r \times 1} (\Z_p)$ and 
$$
P_{n-1,r}^{i, +} := \underset{X \in \M_{r \times (r-1)}(\Z_p)}{\sup} P_{n-1, r}^{i}(X)
$$
for $i \in \left \{ 2, 3 \right \}$. Then we have the following.
\begin{itemize}
    \item The above reduction implies that $P_{n,r}^{1}(X) \leq P_{n-1,r}^{2, +}$ for every $X \in \M_{r \times (r-1)}(\Z_p)$ so
\begin{equation} \label{eq4h}
P_{n,r}^{1, +} \leq P_{n-1,r}^{2, +}.
\end{equation}

    \item For a random $Y \in \GL_r(\Z_p)$, $Ye_r$ is also random in $\M_{r \times 1}(\Z_p) \setminus \M_{r \times 1}(p \Z_p)$. Therefore
\begin{equation*}
\begin{split}
P_{n-1, r}^{2}(X) & \leq \underset{\substack{A' \in \M_{n-1}(\Z_p) \\ Y \in \GL_r(\Z_p)}}{\Prob} (\cok (A') \cong H_1 \text{ and } \cok \left (A' + \begin{pmatrix}
X & Ye_r & O \\ 
O & O & I_{n-r-1}
\end{pmatrix} \right ) \cong H_2) + (1-c_{r,1}) \\
& = \underset{\substack{A' \in \M_{n-1}(\Z_p) \\ Y \in \GL_r(\Z_p)}}{\Prob} (\cok (A') \cong H_1 \text{ and } \cok \left (A' + \begin{pmatrix}
Y^{-1}X & e_r & O \\ 
O & O & I_{n-r-1}
\end{pmatrix} \right ) \cong H_2) + (1-c_{r,1}) \\
& \leq P_{n-1,r}^{3, +} + (1 - c_{r, 1})
\end{split}    
\end{equation*}
for every $X \in \M_{r \times (r-1)}(\Z_p)$ so
\begin{equation} \label{eq4i}
P_{n-1,r}^{2, +} \leq P_{n-1,r}^{3, +} + (1 - c_{r, 1}).
\end{equation}
    
    \item For $X = \begin{pmatrix}
X_1 \\
X_2
\end{pmatrix} \in \M_{((r-1)+1) \times (r-1)}(\Z_p)$, we have
\begin{equation*}
\begin{split}
P_{n-1, r}^{3}(X) & = \underset{A' \in \M_{n-1}(\Z_p)}{\Prob} (\cok (A') \cong H_1 \text{ and } \cok \left (A' + \begin{pmatrix}
X_1 & O & O \\ 
X_2 & 1 & O \\ 
O & O & I_{n-r-1}
\end{pmatrix} \right ) \cong H_2) \\
& = \underset{A' \in \M_{n-1}(\Z_p)}{\Prob} (\cok (A') \cong H_1 \text{ and } \cok \left (A' + \begin{pmatrix}
X_1 & O & O \\ 
O & 1 & O \\ 
O & O & I_{n-r-1}
\end{pmatrix} \right ) \cong H_2) \\
& = P_{n-1, r-1}(X_1).
\end{split}    
\end{equation*}
This implies that
\begin{equation} \label{eq4j}
P_{n-1,r}^{3, +} = P_{n-1, r-1}^{+}.
\end{equation}
\end{itemize}
By the equations (\ref{eq4g}), (\ref{eq4h}), (\ref{eq4i}) and (\ref{eq4j}), we have
\begin{equation*}
P_{n, r}^{+} \leq P_{n-1, r-1}^{+} + d_{n,r}.
\end{equation*}
The proof of the inequality $P_{n-1, r-1}^{-} - d_{n, r}
\leq P_{n,r}^{-}$ is exactly the same.
\end{proof}

Now we prove the main result of this section.

\vspace{3mm}

\noindent \textit{Proof of Theorem \ref{mainthm2}.} Proposition \ref{prop42} implies that the formula (\ref{eq4a}) holds whenever 
$$
\lim_{n \rightarrow \infty} (n - 2r_n) = \infty.
$$
Therefore we may assume that $\displaystyle r_n \geq  \frac{2n}{5}$ for every $n \geq 1$. For $\displaystyle y_n := \left \lfloor \frac{3r_n-n}{2} \right \rfloor \geq \left \lfloor \frac{n}{10} \right \rfloor$, we have
\begin{equation} \label{eq4k}
\begin{split}
P_{n, r_n}^{+} 
& \leq P_{n-y_n, r_n-y_n}^{+} + \sum_{i=0}^{y_n-1} d_{n-i, r_n-i} \\
& \leq P_{n-y_n, r_n-y_n}^{+} + \frac{y_n}{p^{n-r_n}} + \frac{2}{p^{r_n-y_n+1}} \\
& \leq P_{n-y_n, r_n-y_n}^{+} + \frac{n}{p^{n-r_n}} + \frac{2}{p^{\frac{n-r_n}{2}}}
\end{split}    
\end{equation}
by Proposition \ref{prop43}. Now we have the following. 
\begin{itemize}
    \item The condition (\ref{eq4b}) implies that
\begin{equation} \label{eq4l}
\lim_{n \rightarrow \infty} \left ( \frac{n}{p^{n-r_n}} + \frac{2}{p^{\frac{n-r_n}{2}}} \right ) = 0.
\end{equation}

\item $\displaystyle (n-y_n)-2(r_n-y_n) \geq \frac{n-r_n-1}{2}$ so Proposition \ref{prop42} implies that
\begin{equation} \label{eq4m}
\lim_{n \rightarrow \infty} P_{n-y_n, r_n-y_n}^{+} = c(H_1)c(H_2).
\end{equation}
\end{itemize}
By the equations (\ref{eq4k}), (\ref{eq4l}) and (\ref{eq4m}), we have
$$
\limsup_{n \rightarrow \infty} P_{n, r_n}^{+} 
\leq \limsup_{n \rightarrow \infty} P_{n-y_n, r_n-y_n}^{+}
= c(H_1)c(H_2).
$$
The inequality $\displaystyle \liminf_{n \rightarrow \infty} P_{n, r_n}^{-} \geq c(H_1)c(H_2)$ can be deduced by the same argument. $\square$ \\

We expect that the condition (\ref{eq4b}) of Theorem \ref{mainthm2} can be strengthened to $\displaystyle \lim_{n \rightarrow \infty} (n - r_n) = \infty$. Although we did not prove this, we prove its converse: the formula (\ref{eq4a}) does not hold if $n-r_n$ does not go to infinity as $n \rightarrow \infty$. Denote $\displaystyle \alpha_{p, k} := \prod_{i=1}^{k} (1-p^{-i})$ and $\displaystyle \alpha_{p, \infty} := \prod_{i=1}^{\infty} (1-p^{-i})$.

\begin{lemma} \label{lem44}
Fix $k \geq 0$ and denote $\displaystyle I_{n, k}  := \begin{pmatrix}
I_k & O\\ 
O & O
\end{pmatrix} \in \M_{k+(n-k)}(\Z_p)$ for $n \geq k$. Then
\begin{equation*}
\lim_{n \rightarrow \infty} \underset{A \in \M_{n}(\Z_p)}{\Prob}
\begin{pmatrix}
\cok(A) = 0 \text{ and } \\ 
\cok(A+I_{n, k}) = 0
\end{pmatrix} 
= \alpha_{p, \infty} \alpha_{p, k}.
\end{equation*}
\end{lemma}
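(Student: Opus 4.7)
The plan is to reduce the problem modulo $p$ and then apply the matrix determinant lemma to decouple the two invertibility conditions. Since $\cok(M) = 0$ iff $\overline{M} \in \GL_n(\F_p)$ for any $M \in \M_n(\Z_p)$, the quantity in question equals $\Prob_{A \in \M_n(\F_p)}(A, \, A + I_{n,k} \in \GL_n(\F_p))$ with $A$ uniform on $\M_n(\F_p)$.

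First I would factor $I_{n,k} = UV$ with $U = \begin{pmatrix} I_k \\ O \end{pmatrix} \in \M_{n \times k}(\F_p)$ and $V = \begin{pmatrix} I_k & O \end{pmatrix} \in \M_{k \times n}(\F_p)$, and invoke the matrix determinant lemma: for $A \in \GL_n(\F_p)$,
\[
\det(A + I_{n,k}) = \det(A) \det(I_k + V A^{-1} U) = \det(A) \det\bigl(I_k + (A^{-1})_1\bigr),
\]
where $(A^{-1})_1$ denotes the top-left $k \times k$ block of $A^{-1}$. Hence the two events $A \in \GL_n(\F_p)$ and $A + I_{n,k} \in \GL_n(\F_p)$ co-occur iff $A \in \GL_n(\F_p)$ and $I_k + (A^{-1})_1 \in \GL_k(\F_p)$. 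Conditioning on $A \in \GL_n(\F_p)$ (an event of probability $\prod_{i=1}^n (1 - p^{-i}) \to \alpha_{p, \infty}$), the inverse $Y := A^{-1}$ is uniformly distributed on $\GL_n(\F_p)$, so the task reduces to showing
\[
\underset{Y \in \GL_n(\F_p)}{\Prob}\bigl(I_k + Y_1 \in \GL_k(\F_p)\bigr) \;\longrightarrow\; \alpha_{p,k},
\]
where $Y_1$ is the top-left $k \times k$ block of $Y$.

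For this, the key claim is that the marginal distribution of $Y_1$ on $\M_k(\F_p)$ converges to the uniform distribution. Since $Y$ is uniform on $\GL_n(\F_p)$, its first $k$ columns are uniformly distributed among $k$-tuples of linearly independent vectors in $\F_p^n$. Fixing $X \in \M_k(\F_p)$ of rank $r$ and writing the first $k$ columns of $Y$ as $\begin{pmatrix} x_j \\ w_j \end{pmatrix}$ with $x_1, \ldots, x_k$ prescribed by $X$, linear independence is equivalent to injectivity on $\ker X$ of the map $(c_1, \ldots, c_k) \mapsto \sum c_j w_j$. A direct count then gives
\[
\underset{Y \in \GL_n(\F_p)}{\Prob}(Y_1 = X) = \frac{p^{(n-k)r} \prod_{i=0}^{k-r-1}(p^{n-k} - p^i)}{\prod_{i=0}^{k-1}(p^n - p^i)} \;\xrightarrow{n \to \infty}\; p^{-k^2},
\]
independently of $r$. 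Summing over the $|\GL_k(\F_p)| = p^{k^2} \alpha_{p,k}$ matrices $X$ with $I_k + X \in \GL_k(\F_p)$ yields the limit $\alpha_{p,k}$, and multiplying by $\alpha_{p, \infty}$ completes the argument.

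The main technical hurdle is the counting argument verifying that $\Prob(Y_1 = X) \to p^{-k^2}$ for every $X$, irrespective of its rank. Although $\GL_k(\F_p) \times \GL_k(\F_p)$-invariance of the distribution of $Y_1$ (via left/right multiplication by block-diagonal elements of $\GL_n(\F_p)$) reduces the problem to one probability per rank stratum, one must still unpack the linear-independence condition on the first $k$ columns through $\ker X$; the nontrivial content is that the leading asymptotic $p^{(n-k)k}/p^{nk}$ is rank-independent.
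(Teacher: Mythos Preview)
Your proof is correct, but it takes a different route from the paper's. The paper first replaces the deterministic rank-$k$ perturbation $\overline{I_{n,k}}$ by a random matrix $A_2\in\M_{n\times k}(\F_p)$: by Lemma~\ref{lem21b}, $A_2$ has full rank with probability $c_{n,k}$, and conditional on this one can left-multiply by some $Y\in\GL_n(\F_p)$ sending $A_2$ to $\begin{pmatrix} I_k\\ O\end{pmatrix}$ without disturbing the uniformity of the other blocks. After this randomization the two events $(A_1\mid C)\in\GL_n$ and $(A_1+A_2\mid C)\in\GL_n$ decouple, because for random $A_2$ the matrices $A_1$ and $A_1+A_2$ are independent; the resulting probability is exactly $\alpha_{p,n}\,\alpha_{p,k}$. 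Your approach instead conditions on $A\in\GL_n(\F_p)$ and invokes the matrix determinant lemma to rewrite the second invertibility condition as $I_k+(A^{-1})_1\in\GL_k(\F_p)$, then shows by a direct column-count that the top-left $k\times k$ block of a uniform $Y\in\GL_n(\F_p)$ becomes uniformly distributed on $\M_k(\F_p)$ as $n\to\infty$. Both arguments are clean; the paper's has the virtue of reusing its main workhorse Lemma~\ref{lem21b} and yielding an exact closed form for the auxiliary probability $P'_{n,k}$, while yours is self-contained and makes the asymptotic independence transparent through the near-uniformity of the Schur-type block $(A^{-1})_1$.
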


\begin{proof}
Define
\begin{equation*}
 P_{n, k}  
:= \underset{A \in \M_{n}(\Z_p)}{\Prob}
\begin{pmatrix}
\cok(A) = 0 \text{ and } \\ 
\cok(A+I_{n, k}) = 0
\end{pmatrix} 
= \underset{A \in \M_{n}(\F_p)}{\Prob}
\begin{pmatrix}
\cok(A) = 0 \text{ and } \\ 
\cok(A+\overline{I_{n, k}}) = 0
\end{pmatrix}
\end{equation*}
and
$$
P'_{n, k} 
:= \lim_{n \rightarrow \infty} \underset{\substack{A_1, A_2 \in \M_{n \times k}(\F_p) \\ C \in \M_{n \times (n-k)}(\F_p)}}{\Prob}
\begin{pmatrix}
\cok \left(\begin{array}{@{}c|c@{}}
    A_1 & C \\
  \end{array}\right) = 0 \text{ and } \\ 
\cok \left(\begin{array}{@{}c|c@{}}
    A_1+A_2 & C \\
  \end{array}\right) = 0
\end{pmatrix}.
$$
Then Lemma \ref{lem21b} (applied to $A_2$) implies that
$$
\left| P_{n, k} - P'_{n, k} \right| \leq 1-c_{n, k}.
$$
A matrix $\begin{pmatrix}
A_1 & C
\end{pmatrix} \in \M_{n \times (k+(n-k))}(\F_p)$ has a trivial cokernel if and only if $\rank (C) = n-k$ and the columns of $A_1$ give an $\F_p$-basis of $\F_p^n/C \F_p^{n-k} \cong \F_p^k$. 
For a random $A_1 \in \M_{n \times k}(\F_p)$, the images of its columns in $\F_p^n/C \F_p^{n-k}$ are also random. 
Therefore
\begin{equation*}
\begin{split}
P'_{n, k} & = \underset{C \in \M_{n \times (n-k)}(\F_p)}{\Prob} (\rank (C) = n-k) \times \left ( \frac{\left |\GL_k(\F_p)  \right |}{\left |\M_k(\F_p)  \right |} \right )^2 \\
& = \frac{\alpha_{p, n}}{\alpha_{p, k}} \alpha_{p, k}^2 \\
& = \alpha_{p, n} \alpha_{p, k}
\end{split}
\end{equation*}
so
\begin{equation*}
\lim_{n \rightarrow \infty} P_{n, k}
= \lim_{n \rightarrow \infty} P'_{n, k}
= \alpha_{p, \infty} \alpha_{p, k}. \qedhere
\end{equation*}
\end{proof}

\begin{proposition} \label{prop45}
If the formula (\ref{eq4a}) holds for $H_1=H_2=0$, then $\displaystyle \lim_{n \rightarrow \infty} (n - r_n) = \infty$.
\end{proposition}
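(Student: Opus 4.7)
The plan is to prove the contrapositive: assuming $\lim_{n \to \infty}(n-r_n) \neq \infty$, I will show that the limiting probability in (\ref{eq4a}) with $H_1 = H_2 = 0$ cannot equal $c(0)^2 = \alpha_{p,\infty}^2$. The central observation is that the event $\cok(A) = 0$ is equivalent to $\overline{A} \in \GL_n(\F_p)$, and likewise $\cok(A+B_n) = 0$ is equivalent to $\overline{A} + \overline{B_n} \in \GL_n(\F_p)$, so the entire probability is a function of the reductions modulo $p$ only. Consequently, after averaging over a uniformly random $A \in \M_n(\Z_p)$, the value depends on $\overline{B_n}$ only through its $\GL_n(\F_p) \times \GL_n(\F_p)$-equivalence class, that is, only through $\rank(\overline{B_n}) = n - r_n$.

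If $n - r_n$ fails to tend to infinity, then $\liminf_{n \to \infty}(n - r_n) < \infty$, so I can extract a subsequence $\{n_m\}$ on which $n_m - r_{n_m} = k$ is constant, for some fixed integer $k \geq 0$. On this subsequence, $\overline{B_{n_m}}$ has rank $k$, so there exist $P_m, Q_m \in \GL_{n_m}(\F_p)$ with $P_m \overline{B_{n_m}} Q_m = \overline{I_{n_m, k}}$. The substitution $\overline{A} \mapsto P_m \overline{A} Q_m$ preserves the uniform distribution on $\M_{n_m}(\F_p)$ as well as the events $\overline{A} \in \GL_{n_m}(\F_p)$ and $\overline{A} + \overline{B_{n_m}} \in \GL_{n_m}(\F_p)$, so the probability in (\ref{eq4a}) for $n = n_m$ coincides with the quantity $P_{n_m, k}$ of Lemma \ref{lem44}, which tends to $\alpha_{p, \infty}\,\alpha_{p, k}$ as $m \to \infty$.

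If the formula (\ref{eq4a}) held for $H_1 = H_2 = 0$, this limit would simultaneously equal $c(0)^2 = \alpha_{p, \infty}^2$, forcing $\alpha_{p, k} = \alpha_{p, \infty}$; but $\alpha_{p, k} = \prod_{i=1}^{k}(1 - p^{-i}) > \prod_{i=1}^{\infty}(1 - p^{-i}) = \alpha_{p, \infty}$ strictly for every finite $k$, which is the desired contradiction. The only nontrivial bookkeeping is the reduction to a mod-$p$ statement together with the invariance under the basis change $\overline{A} \mapsto P_m \overline{A} Q_m$; once this is set up, Lemma \ref{lem44} delivers the numerical value that rules out the hypothesized independence.
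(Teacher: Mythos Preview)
Your proof is correct and follows essentially the same route as the paper: reduce modulo $p$ (since the events $\cok(A)=0$ and $\cok(A+B_n)=0$ depend only on $\overline{A}$ and $\overline{B_n}$), extract a subsequence with $n_m-r_{n_m}=k$ fixed, normalize $\overline{B_{n_m}}$ to $\overline{I_{n_m,k}}$ by a two-sided $\GL_{n_m}(\F_p)$-action, and then invoke Lemma~\ref{lem44} to obtain the limit $\alpha_{p,\infty}\alpha_{p,k}\neq \alpha_{p,\infty}^2$. The paper phrases the normalization step as ``Smith normal form'' rather than spelling out the substitution $\overline{A}\mapsto P_m\overline{A}Q_m$, but the content is identical.
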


\begin{proof}
Assume that the condition $\displaystyle \lim_{n \rightarrow \infty} (n - r_n) = \infty$ does not hold. Choose an integer $k \geq 0$ and a subsequence $\left \{ B_{w_n} \right \}$ of $\left \{ B_{n} \right \}$ such that 
$$
w_n - r_p(\cok(B_{w_n})) = k
$$
for every $n \geq 1$. The Smith normal form of $\overline{B_{w_n}} \in \M_{w_n}(\F_p)$ is $\overline{I_{w_n, k}} \in \M_{k+(w_n-k)}(\F_p)$ so
\begin{equation*}
\begin{split}
\lim_{n \rightarrow \infty} \underset{A \in \M_{w_n}(\Z_p)}{\Prob}
\begin{pmatrix}
\cok(A) =0 \text{ and } \\ 
\cok(A+B_{w_n}) = 0
\end{pmatrix}  & = \lim_{n \rightarrow \infty} \underset{A \in \M_{w_n}(\F_p)}{\Prob}
\begin{pmatrix}
\cok(A) =0 \text{ and } \\ 
\cok(A+\overline{I_{w_n, k}}) = 0 
\end{pmatrix} \\
& = \alpha_{p, \infty} \alpha_{p, k} \\
& \neq \alpha_{p, \infty}^2
\end{split}    
\end{equation*}
by Lemma \ref{lem44}. 
\end{proof}

\begin{remark} \label{rmk46}
In the proof of the proposition above, $\displaystyle \lim_{k \rightarrow \infty} \alpha_{p, \infty} \alpha_{p, k} = \alpha_{p, \infty}^2$ so the events $\cok(A)=0$ and $\cok(A+B_n)=0$ for a random $A \in \M_n(\Z_p)$ become independent as $n \rightarrow \infty$ whenever $\displaystyle \lim_{n \rightarrow \infty} (n - r_n) = \infty$. Since we consider the zero cokernels, the distribution does not change after the reduction modulo $p$. However, this argument cannot be applied to the nonzero cokernels. 

For example, a version of Theorem \ref{mainthm1} for a random matrix over $\F_p$ was proved by Boreico \cite[Theorem 3.8.18]{Bor16} (and independently by Cheong and Huang \cite[Theorem 2.10]{CH21}), but it is not easy to lift the distribution over $\F_p$ to the distribution over $\Z_p$. In fact, the main ingredient of the proof of Proposition \ref{prop11} in \cite{CK22} is the counting result for such lifting.
\end{remark}

\section*{Acknowledgments}

The author is supported by a KIAS Individual Grant (SP079601) via the Center for Mathematical Challenges at Korea Institute for Advanced Study. We thank Gilyoung Cheong for his helpful comments. 

{\small  }

\end{document}